\newtheorem{theorem}{Theorem}[section]
\newtheorem{lemma}[theorem]{Lemma}
\newtheorem{proposition}[theorem]{Proposition}
\newtheorem{corollary}[theorem]{Corollary}
\theoremstyle{definition}
\newtheorem{definition}[theorem]{Definition}
\newtheorem{example}[theorem]{Example}
\theoremstyle{remark}
\numberwithin{equation}{section}
\newcommand{\bra}[1]{\left(#1\right)}
\newcommand{\norm}[1]{\left\Vert#1\right\Vert}
\newcommand{\set}[1]{\left\{#1\right\}}
\newcommand{\seq}[1]{\left<#1\right>}
\newcommand{\R}{\mathbb R}
\newcommand{\x}{\mathscr{X}}
\newcommand{\bh}{\mathscr{B}(\mathscr{H})}
\newcommand{\A}{\mathscr{A}}
\renewcommand{\c}{\mathbb{C}}
\newcommand{\tTwu}{T_{w,u}}
\newcommand{\tTwus}{T_{w,u}^{*}}
\begin{document}
	
	\setcounter{page}{1}
	\title[$k$-Quasi $n$-Power Posinormal Operators
	]{ $k$-Quasi $n$-Power Posinormal Operators: Theory and Weighted Conditional Type Applications}
	\author[Sophiya S Dharan, T. PRASAD and M.H.M. Rashid]{Sophiya S Dharan, T . PRASAD and M.H.M. Rashid}
	\address{Sophiya S Dharan\endgraf Department of Mathematics\endgraf Government Polytechnic College\endgraf Kalamassery, Ernakulam,
		Kerala, India-680567}
	\email{ sophysasi@gmail.com}
	\address{T. Prasad \endgraf Department of Mathematics\endgraf University of  Calicut\endgraf Malapuram, Kerala, \endgraf India - 673635}
	\address{M.H.M. Rashid \endgraf Department of Mathematics\& Statistics\endgraf Faculty of Science P.O. Box(7)\endgraf Mutah University\endgraf Alkarak-Jordan}
\email{malik\_okasha@yahoo.com}
	\dedicatory{ }
	
	\let\thefootnote\relax\footnote{}

	\subjclass[2010]{47A10, 47B20}
	
	\keywords{ posinormal operator, weighted translation, conditional expectation, measurable functions}
	
\begin{abstract}
	This paper introduces and investigates the class of \textit{$k$-quasi $n$-power posinormal operators} in Hilbert spaces, generalizing both posinormal and $n$-power posinormal operators. We establish fundamental properties including matrix representations in $2 \times 2$ block form, tensor product preservation ($T\otimes S$ remains in the class when $T,S$ are), and complete characterizations for weighted conditional type operators $\tTwu := wE(uf)$ on $L^2(\Sigma)$. Key theoretical contributions include a structural decomposition theorem for operators with non-dense range, spectral properties, invariant subspace behavior, and interactions with isometric operators. For weighted operators, we derive explicit conditions for $k$-quasi $n$-power posinormality in terms of weight functions $w,u$ and their conditional expectations. The work bridges abstract operator theory with concrete applications, particularly in conditional expectation analysis, while significantly extending posinormal operator theory. The results provide new tools for operator analysis with potential applications in spectral theory, functional calculus, and mathematical physics. Concrete examples throughout the paper illustrate the theory, and the framework opens new research directions in operator theory and its applications, offering both theoretical insights and practical computational tools for analyzing this important class of operators in Hilbert spaces.
\end{abstract}\maketitle

\section{Introduction and Preliminaries}

Consider $\bh$ as the algebra encompassing all bounded linear operators on a complex Hilbert space $\mathscr{H}$. For any operator $T \in \mathscr{H}$, we symbolize $ker(T)$, $T(\mathscr{H})$, and $\sigma(T)$ to respectively represent the kernel, the range, and the spectrum of $T$. It's important to recall that within this context, an operator $T \in B(H)$ earns the designation of being hyponormal if it adheres to the condition $T^{*}T \geq TT^{*}$, and it qualifies as posinormal if it can be associated with a positive operator $P \geq 0$ such that $TT^* = T^*PT$ holds true, or equivalently, $\lambda^2T^*T - TT^*\geq 0$ for some $\lambda > 0$ (as established by Rhaly in \cite{rha}).

Hyponormal operators fall within the broader category of posinormal operators, as elucidated by Rhaly in \cite{rha}. Recent studies by various authors have delved into several intriguing properties of posinormal operators, as documented in \cite{Bour1}, \cite{Bour2}, \cite{Bour3}, \cite{Itoh}, \cite{Kubrusly1}, and \cite{Kubrusly2}.

Notably, Beiba introduced and examined the concept of $n$-power posinormal operators in \cite{Beiba}. An operator $T$ is classified as an $n$-power posinormal operator, where $n \in \mathbb{N}$, if it satisfies the condition that there exists a positive operator $P \geq 0$ for which $T^nT^{*n} = T^*PT$ holds true. This can also be expressed as $T^nT^{*n} \leq \lambda^2 T^*T$ for some $\lambda > 0$, or alternatively as $\lambda^2T^*T - T^nT^{*n} \geq 0$, as defined in \cite{Beiba}.

Let $(\mathscr{X},\Sigma,\mu)$ be a complete $\sigma$-finite measure space, and $\mathscr{A}$ be a $\sigma$-subalgebra of $\Sigma$. This setup ensures that $(\mathscr{X},\mathscr{A},\mu)$ is also a complete $\sigma$-finite measure space. We denote $L^0(\Sigma)$ as the space of complex-valued functions on $\mathscr{X}$ that are measurable with respect to $\Sigma$. The support of a measurable function $f$ is given by $S(f) = \{x \in \mathscr{X}: f(x) \neq 0\}$.

For any non-negative $f \in L^0(\Sigma)$, there exists a measure $\nu_f(B) = \int_B f d\mu$ for every $B \in \mathscr{A}$ that is absolutely continuous with respect to $\mu$. According to the Radon-Nikodym theorem, there exists a unique non-negative $\mathscr{A}$-measurable function $E(f)$ such that
$$\int_BE(f)\,d\mu=\int_Bf\,d\mu\,\,\,\mbox{for all $B\in\A$}.$$

Consequently, we obtain an operator $E$, often referred to as the conditional expectation operator associated with $\mathscr{A}$, which maps $L^2(\Sigma)$ to $L^2(\mathscr{A})$. This operator plays a crucial role in this paper, and we list some of its useful properties here (refer to \cite{Azimi, Estaremi2, Rao} for more details):
\begin{enumerate}
  \item [(i)] If $g\in L^0(\A)$, then $E(gf)=gE(f)$.
  \item [(ii)] If $f\geq 0$, then $E(f)\geq 0$; if $f>0$, then $E(f)> 0$.
  \item [(iii)] $|E(f)|^{p}\leq E(|f|^p)$.
 \item [(iv)] (H\"older Inequality) If $p$ and $q$ are conjugate exponents and $f\in L^p(\Sigma)$ and $g\in L^q(\Sigma)$,
 $$|E(fg)|\leq E(|f|^{p})^{\frac{1}{p}}E(|g|^{q})^{\frac{1}{q}}.$$
 \item [(v)] (Conditional Jensen's inequality) If $\psi: \R \to \R$ is convex and $\psi(f)$ is
conditionable, then $\psi(E(f))\leq E(\psi(f))$.
\end{enumerate}
 The weighted conditional type operator $T_{w,u}:L^2(\Sigma)\to L^2(\Sigma)$ is given by
 $$T_{w,u}(f):=M_{w}EM_{u}(f)=wE(uf)$$
 where  $w: \x\to\c$ and $u:\x\to\c$ is measurable weight function.
  This operator is considered bounded if and only if $(E(|w|^2))^{\frac{1}{2}}(E(|u|^2))^{\frac{1}{2}}\in L^{\infty}(\A)$ and in such cases, its norm is determined by $\norm{T_{w,u}}=\norm{(E(|w|^2))^{\frac{1}{2}}(E(|u|^2))^{\frac{1}{2}}}<{\infty}$ (refer to \cite{EJ}). When $w=1$, the operator $T_{w,u}$ has been extensively discussed in \cite{Herron}. Recently, various classes of $T_{w,u}$, such as class $A$, $\ast$-class $A$, quasi-$\ast$ class $A$, and $k$-quasi $A_n^*$, along with their spectra  has been investigated by many authors. See \cite{Azimi, Estaremi1, Estaremi2} for instance.

  The theory of $k$-quasi $n$-power posinormal operators developed in this paper has significant applications across several areas of mathematical analysis, including: (1) spectral theory, where the structural decomposition provides new insights into operator spectra; (2) functional calculus, particularly for operators arising in quantum mechanics and signal processing; (3) stochastic analysis through the weighted conditional type operators $\tTwu$, which model important transformations in probability theory; (4) frame theory and sampling theory, where posinormal operators naturally appear in reconstruction formulas; and (5) operator algebras, offering new tools for studying operator semigroups and $C^*$-algebra representations. The tensor product results have direct implications for quantum information theory, while the weighted operator characterizations apply to problems in time-frequency analysis and prediction theory. These applications demonstrate the broad relevance of our results to both pure and applied mathematics.

The paper is organized as follows: Section 2 introduces the main definition and develops the theory of $k$-quasi $n$-power posinormal operators. Section 3 focuses on weighted conditional type operators, establishing necessary and sufficient conditions for them to belong to this class. We conclude with remarks on potential applications and directions for future research.

Our work extends the existing theory of posinormal operators while providing new tools for analyzing operator classes in Hilbert spaces. The results have potential applications in operator theory, functional analysis, and related areas of mathematical physics.

\section{$k$-quasi $n$-power posinormal }

This section introduces and develops the fundamental theory of $k$-quasi $n$-power posinormal operators, a significant generalization of both posinormal and $n$-power posinormal operators. The study of these operators is motivated by several key considerations in operator theory: the need for a broader framework that unifies and extends existing posinormal operator classes,  the natural appearance of such operators in various functional-analytic contexts and their important structural properties that facilitate spectral analysis

Let $k,n$ be positive integers and let $T$, $S$ be operators in $B(\mathscr{H})$, Then $S$ is a $k$-quasi $n$-interrupter for $T$ if $T^{*k}T^nT^{*n}T^k=T^{*{k+1}}ST^{k+1}$.
If $S$ is a $k$-quasi $n$-interrupter, then we obtain  $\lVert S\rVert \geq \dfrac{\lVert T^{*n}T^k\rVert^2}{\lVert T\rVert^{2k+2}}$, if $T\neq0$. and $\langle Sy, y\rangle \geq 0$, for all $y\in \overline{R(T^{k+1})}$. Consequently,  If $T^{k+1}$ has a dense range, then $T$ has atmost one $k$-quasi $n$-interrupter $S$ and $S\geq0$. Now we define \textit{$k$-quasi $n$-power posinormal} operators as follows:

\begin{definition}
An operator $T \in \bh$ is said to be \textit{$k$-quasi $n$-power posinormal} if \\$R(T^{*k}T^n) \subseteq R(T^{*{k+1}})$ and $T$ is \textit{$k$-quasi $n$-power cosiposinormal} if $T^*$ is \textit{$k$-quasi $n$-power posinormal}.
\end{definition}

If $N(T)=\{0\}$, then $T^*$ is surjective. In that case, $T$ is $k$-quasi $n$-power posinormal for any positive integer $n$ and $k$.

\begin{theorem}\cite{Douglas}\label{Thm2.5}
Let $A$ and $B$ be bounded operators on a Hilbert space $\mathscr{H}$. The following statements are equivalent:-
\begin{enumerate}
\item $R(A)\subseteq R(B)$.
\item $AA^* \leq \mu^2 BB^*$, for some $\mu \geq 0$.
\item There exists a bounded operator $C$ so that $A=BC$.\\
Moreover, if $(1)$, $(2)$ and $(3)$ hold, then there is a unique operator $T$ such that
\begin{enumerate}
\item $\lVert T \rVert^2= \text{inf} ~\{ \mu; AA^*\leq \mu^2 BB^*\}$.
\item $N(A)= N(T)$.
\item $R(T) \subseteq \overline{R(B^*)}$.
\end{enumerate}

\end{enumerate}
\end{theorem}
Using the Theorem \ref{Thm2.5}, in \cite{rha} Rhaly proved the equivalent conditions for $T$ to be posinormal and Beiba in \cite{Beiba} introduced $n$ power posinormality and proved the equivalent conditions for $T$ to be $n$-power posinormal. Here, we give the equivalent conditions for $k$-quasi $n$-power posinormal operators. In a similar manner we obtain the following result

\begin{theorem}\label{Thm 2.7}
For $T\in B(\mathscr{H})$, the following are equivalent:-
\begin{enumerate}
\item $T$ has positive $k$-quasi $n$-interrupter.
\item $T^{*k}T^nT^{*n}T^k \leq \lambda^2 T^{*{k+1}}T^{k+1}$ or $\lVert T^{*n}T^kx\rVert \leq \lambda \lVert T^{k+1}x \rVert$ , for all $x\in \mathscr{H}$ and some $\lambda >0$.
\item $T$ is $k$-quasi $n$-power posinormal, $R(T^{*k}T^n) \subseteq R(T^{*{k+1}})$.
\item There exist $C\in B(\mathscr{H})$, $T^{*k}T^n=T^{*{k+1}}C$.
Moreover, if $(1)$, $(2)$, $(3)$ and $(4)$ hold, then there is a unique operator $T$ such that
\begin{enumerate}
\item $\lVert S \rVert^2= \text{inf} ~\{ \lambda;~ T^{*k}T^nT^{*n}T^k \leq \lambda^2 T^{*{k+1}}T^{k+1}\}$.
\item $N(T^{*k}T^n)= N(S)$.
\item $R(S) \subseteq \overline{R(T^{k+1})}$.
\end{enumerate}

\end{enumerate}
\end{theorem}

\begin{proof}
$(1) \implies (2)$. Suppose $T$ has positive $k$-quasi $n$-interrupter.
Therefore, there exists $S \geq 0$ such that, $T^{*k}T^nT^{*n}T^k=T^{*{k+1}}ST^{k+1}$.
Now, $\langle T^{*k}T^nT^{*n}T^k x, x\rangle=\langle \sqrt{S}T^{k+1}x, \sqrt{S}T^{k+1}x\rangle=\lVert  \sqrt{S}T^{k+1}x\rVert^2 \leq \lVert \sqrt{S} \rVert^2 \lVert T^{k+1}x \rVert^2= \lVert \sqrt{S} \rVert^2 \langle T^{*{k+1}}T^{k+1}x,x\rangle$.
Hence $(2)$ holds for $\lambda > \lVert \sqrt{S} \rVert$.
Taking $A=T^{*k}T^n$ and $B=T^{*{k+1}}$ and using Theorem \ref{Thm 2.7}, we get, 
$(2) \iff (3) \iff (4)$.\\
$(4)\implies (1)$. Suppose, $T^{*k}T^n=T^{*{k+1}}C$. Therefore $T^{*k}T^nT^{*n}T^k=T^{*{k+1}}CC^*T^{k+1}=T^{*{k+1}}ST^{k+1}$, where $S=CC^*$.
Also, $(a)$, $(b)$ and $(c)$ follows by taking $A=T^{*k}T^n$ and $B=T^{*{k+1}}$ and applying Theorem \ref{Thm 2.7}.
\end{proof}


The following inclusion hold in general:
$$\text{Posinormal} \subsetneq \text{$n$-power posinormal} \subsetneq \text{$k$-quasi $n$-power posinormal}$$

Following the definition, we will establish several fundamental properties:  Characterizations through operator inequalities, behavior under restrictions to invariant subspaces, Interactions with other operator classes, and  spectral properties and matrix representations The theory developed here provides powerful tools for analyzing operator structure while maintaining connections to concrete applications, particularly through the examples and special cases we examine. Our approach combines abstract operator theory with constructive matrix methods, yielding results that are both theoretically deep and computationally verifiable.


\begin{example}
Consider the operator $A:=\left [\begin{array}{ccc}
0&1&0\\
0&0&1\\
0&0&0
\end{array}\right]$ on $\mathbb{R}^3$. Then $A$ is  $3$-quasi $2$-power posinormal operator, but not  $2$-power posinormal operator.
\end{example}

\begin{example}
The operator $A:{\ell}^2\rightarrow{\ell}^2$ defined as,
$$A(x_1,x_2, x_3,x_4,...)=(x_2,x_3,0,0,...)$$ is  $3$-quasi $2$-power posinormal operator, but  not $2$-power posinormal operator.
\end{example}
In particular if $k=0$, $n=1$, the class of $k$-quasi $n$-power posinormal operators coincides with the class of posinormal operators and if $k=1$, this  coincides with the class of $n$-power posinormal operators.
It is evident that following inclusion holds in general;
  $$\textrm{posinormal}  \subseteq \textrm{$n$-power posinormal } \subseteq \textrm{$k$-quasi $n$-power posinormal.}$$

\begin{proposition}

Let  $T \in \bh$. Then $T$ is a $k$-quasi $n$-power posinormal operator if and only if $\lVert T^{*n}T^k x \rVert^2 \leq \lambda^2 \lVert T^{k+1}x\rVert^2$ for all $x\in \mathscr{H}$ and $\lambda >0$. \\
Also the following assertions hold:
	\\
	(i) $\lVert T^{*n}T^m x \rVert^2 \leq \lambda^2 \lVert T^{m+1}x\rVert^2$ for all $x\in \mathscr{H}$, $\lambda >0$ and all positive integers $m\geq k$.\\
	(ii)  If $T^m=0$ for $m> k$, then $T^k=0$ for $k>n$.
\end{proposition}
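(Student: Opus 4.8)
The plan is to establish the biconditional first and then read off (i) and (ii) from it. For the equivalence, I would expand the operator inequality into its quadratic form: by definition, $T^{*k}(\lambda^2 T^*T - T^nT^{*n})T^k \ge 0$ means $\langle (\lambda^2 T^*T - T^nT^{*n})T^k x, T^k x\rangle \ge 0$ for every $x\in\mathscr{H}$. Setting $y=T^k x$ and splitting the expression, the left side becomes $\lambda^2\langle T^*T y, y\rangle - \langle T^nT^{*n} y, y\rangle = \lambda^2\norm{Ty}^2 - \norm{T^{*n}y}^2 = \lambda^2\norm{T^{k+1}x}^2 - \norm{T^{*n}T^k x}^2$. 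Since a self-adjoint operator is positive if and only if its associated quadratic form is nonnegative on all of $\mathscr{H}$, nonnegativity of this expression for every $x$ is exactly the stated norm inequality, so the equivalence follows at once.

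For assertion (i), the point is simply that the characterization holds for all vectors, hence may be applied with $x$ replaced by $T^{m-k}x$ whenever $m\ge k$ (so that $m-k\ge 0$ and $T^{m-k}$ is a genuine nonnegative power). Using $T^k T^{m-k}=T^m$ and $T^{k+1}T^{m-k}=T^{m+1}$, the resulting inequality $\norm{T^{*n}T^m x}^2\le\lambda^2\norm{T^{m+1}x}^2$ is immediate.

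Assertion (ii) is the substantive part, and I would isolate a single descent step and then iterate it. Claim: if $j\ge k$ and $T^{j+1}=0$, then $T^j=0$. To prove it, apply (i) with $m=j$; since $T^{j+1}=0$ the right-hand side vanishes, forcing $T^{*n}T^j=0$, equivalently $\langle T^j x, T^n y\rangle = 0$ for all $x,y$. The crucial move is to put $y=T^{j-n}x$, which is legitimate precisely because the standing hypothesis $k>n$ guarantees $j-n\ge 0$ throughout; then $T^n y = T^n T^{j-n}x = T^j x$ and the identity collapses to $\norm{T^j x}^2=0$, so $T^j=0$. Conceptually, $T^{*n}T^j=0$ says $\mathrm{ran}(T^j)\perp\mathrm{ran}(T^n)$, and since $j\ge n$ forces $\mathrm{ran}(T^j)\subseteq\mathrm{ran}(T^n)$, the range of $T^j$ is orthogonal to itself and hence trivial. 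Finally, starting from $T^m=0$ with $m>k$, I would apply this descent repeatedly: the exponent drops one unit at a time from $m$ down to $k$, and every exponent encountered stays $\ge k$, so the claim applies at each stage, yielding $T^k=0$. The one genuinely non-formal point, and the place where the hypothesis $k>n$ is indispensable, is this self-pairing that upgrades $T^{*n}T^j=0$ into $T^j=0$; everything else is bookkeeping on powers of $T$.
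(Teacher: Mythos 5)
Your proposal is correct and follows essentially the same route as the paper: the equivalence by expanding the quadratic form with $y=T^kx$, (i) by pushing the inequality forward along powers of $T$, and (ii) by the self-pairing $\norm{T^jx}^2=\seq{T^{*n}T^jx,\,T^{j-n}x}=0$. The only difference is that your one-step descent iterated from $m$ down to $k$ actually handles general $m>k$, whereas the paper's proof of (ii) only treats $m=k+1$ explicitly (and your use of $k>n$ to justify $j-n\geq 0$ is the correct reading of the hypothesis, which the paper misstates as ``for $n>k$'' in its computation).
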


\begin{proof}

		(i)
		 Since $T \in \bh$ is a $k$-quasi $n$-power posinormal operator,
	$$T^{*k}(\lambda ^2T^*T-T^nT^{*n})T^k \geq 0$$
	for some $\lambda > 0.$

Now, for all $x \in \mathscr{H}$
\begin{align*}
\langle T^{*k}(\lambda^2T^*T-T^nT^{*n})T^kx,x \rangle ~\geq 0	
&\Longleftrightarrow \langle (\lambda^2T^*T-T^nT^{*n})T^kx,T^kx \rangle ~\geq 0 \\
&\Longleftrightarrow \lambda^2 \langle T^{k+1}x,T^{k+1}x \rangle - \langle T^{*n}T^kx,T^{*n}T^kx \rangle ~\geq 0\\
&\Longleftrightarrow \lVert T^{*n}T^kx \rVert^2 \leq \lambda^2\lVert T^{k+1}x\rVert^2.
\end{align*}

	(i) Since $k$-quasi $n$-power posinormal operator is $(k+1)$-quasi $n$-power posinormal, we have
		$$\lVert T^{*n}T^mx \rVert^2 \leq \lambda^2\lVert T^{m+1}x\rVert^2~~\text{for all}~~m\geq k .$$\\
		
		(ii) Let $T^{k+1}=0$.
		Therefore, $T^{k+1}x=0~~\text{for all}~~x \in \mathscr{H}$.\\
		Then by  (i), it follows that
		$$\lVert T^{*n}T^kx\rVert^2\leq 0$$
		for all $x\in \mathscr{H},$ and so $$\rVert T^{*n}T^kx \rVert^2=0.$$
		Therefore, $$T^{*n}T^kx=0~~ \text{for all}~~x\in \mathscr{H}.$$ Now, for $n>k$, $$\lVert T^kx \rVert^2~= ~ \langle T^kx,T^kx \rangle ~= ~ \langle T^*T^kx,T^{k-1}x \rangle ~=~ \langle T^{*n}T^kx,T^{k-n}x \rangle ~=~0.$$ Therefore, for $n>k$,
		$$T^k=0.$$ This completes the proof.
	
\end{proof}


\begin{corollary}
If $T\in \mathscr{H}$ is $k$-quasi $n$-power posinormal operator, then for all $m\geq k$\\
$$\lVert T^{*n}T^m  \rVert \leq \lambda^2 \lVert T^{m+1}\rVert.$$
\end{corollary}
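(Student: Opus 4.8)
The plan is to obtain the operator-norm inequality as an immediate consequence of the pointwise estimate established in part (i) of the preceding proposition. Since $T$ is $k$-quasi $n$-power posinormal, that result supplies, for every $x \in \mathscr{H}$ and every integer $m \geq k$, the bound $\lVert T^{*n}T^m x\rVert^2 \leq \lambda^2 \lVert T^{m+1}x\rVert^2$. The entire argument is then a routine passage from a vectorwise inequality to one between operator norms, so I expect no genuine obstacle; the only points requiring care are the submultiplicativity bookkeeping and the power of $\lambda$.

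First I would apply submultiplicativity of the operator norm to the right-hand side, writing $\lVert T^{m+1}x\rVert \leq \lVert T^{m+1}\rVert\,\lVert x\rVert$, so that after taking square roots the pointwise inequality reads $\lVert T^{*n}T^m x\rVert \leq \lambda\,\lVert T^{m+1}\rVert\,\lVert x\rVert$ for all $x \in \mathscr{H}$. Taking the supremum over the unit sphere $\{x \in \mathscr{H} : \lVert x\rVert = 1\}$ and invoking $\lVert T^{*n}T^m\rVert = \sup_{\lVert x\rVert = 1}\lVert T^{*n}T^m x\rVert$ then delivers the asserted estimate for every $m \geq k$.

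The one subtle point is the exponent on $\lambda$: taking the square root of the squared inequality from part (i) produces the factor $\lambda$ rather than $\lambda^2$, so the sharp bound emerging from this argument is $\lVert T^{*n}T^m\rVert \leq \lambda\,\lVert T^{m+1}\rVert$, from which the stated form with $\lambda^2$ is a valid (if non-optimal) weakening once $\lambda \geq 1$. To keep the constant consistent with the proposition, I would instead phrase the conclusion in squared form, $\lVert T^{*n}T^m\rVert^2 \leq \lambda^2\,\lVert T^{m+1}\rVert^2$, which is precisely the supremum over unit vectors of the inequality in part (i) and reproduces the constant there verbatim, avoiding any loss.
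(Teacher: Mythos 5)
Your argument is correct and is exactly the intended one: the paper states this corollary without proof as an immediate consequence of part (i) of the preceding proposition, obtained by taking the supremum over unit vectors, just as you do. Your observation about the constant is also a legitimate catch --- the pointwise inequality yields $\lVert T^{*n}T^m\rVert \leq \lambda\,\lVert T^{m+1}\rVert$ (equivalently the squared form with $\lambda^2$), so the paper's stated bound with $\lambda^2$ against unsquared norms is either a typo or requires $\lambda\geq 1$, and your squared reformulation is the clean way to state it.
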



\begin{proposition}
	If $T$ is a $k$-quasi n-power posinormal operator and $\mathscr{M}$ is a closed $T$-invariant subspace of $\mathscr{H}$, then the restriction $T|_\mathscr{M}$ is also $k$-quasi $n$-power posinormal operator.
\end{proposition}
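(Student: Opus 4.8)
The plan is to reduce the statement to the operator inequality criterion established in the preceding Proposition, namely that $T|_\mathscr{M}$ is $k$-quasi $n$-power posinormal precisely when $\lVert (T|_\mathscr{M})^{*n}(T|_\mathscr{M})^k y\rVert^2 \le \lambda^2 \lVert (T|_\mathscr{M})^{k+1} y\rVert^2$ for all $y \in \mathscr{M}$, and to produce this with the \emph{same} constant $\lambda$ that witnesses the property for $T$. First I would fix the orthogonal decomposition $\mathscr{H} = \mathscr{M} \oplus \mathscr{M}^{\perp}$ and write $T$ in block form
$$T = \begin{pmatrix} A & B \\ 0 & C \end{pmatrix},$$
where $A := T|_\mathscr{M}$; the vanishing lower-left block is exactly the hypothesis that $\mathscr{M}$ is $T$-invariant. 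Then $T^* = \begin{pmatrix} A^* & 0 \\ B^* & C^* \end{pmatrix}$ is block lower-triangular, and since lower-triangularity is preserved under products, $T^{*n}$ is again block lower-triangular with diagonal blocks $A^{*n}$ and $C^{*n}$.

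Two observations drive the argument. Since $\mathscr{M}$ is invariant, an easy induction gives $T^j y = A^j y$ for every $y \in \mathscr{M}$ and every $j \ge 0$; in particular the right-hand side is unchanged, $\lVert T^{k+1} y\rVert = \lVert A^{k+1} y\rVert$. For the left-hand side, the delicate point is that $A^*$ is the adjoint of $A$ \emph{as an operator on} $\mathscr{M}$ and is \emph{not} the restriction of $T^*$ to $\mathscr{M}$ (indeed $T^*$ leaves $\mathscr{M}^{\perp}$, not $\mathscr{M}$, invariant). Nevertheless, the triangular structure shows that for any $z \in \mathscr{M}$ the $\mathscr{M}$-component of $T^{*n} z$ is exactly $A^{*n} z$: writing $P$ for the orthogonal projection onto $\mathscr{M}$, this reads $A^{*n} z = P T^{*n} z$, whence $\lVert A^{*n} z\rVert = \lVert P T^{*n} z\rVert \le \lVert T^{*n} z\rVert$.

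Applying this with $z = T^k y = A^k y$ for $y \in \mathscr{M}$, I would chain the inequalities
$$\lVert A^{*n} A^k y\rVert^2 \le \lVert T^{*n} T^k y\rVert^2 \le \lambda^2 \lVert T^{k+1} y\rVert^2 = \lambda^2 \lVert A^{k+1} y\rVert^2,$$
where the first step is the projection estimate just derived, the second is the hypothesis that $T$ is $k$-quasi $n$-power posinormal (valid for all $x \in \mathscr{H}$, hence in particular for $x = y \in \mathscr{M}$), and the final equality is the invariance identity. This is precisely the Proposition's criterion for $A = T|_\mathscr{M}$ on the Hilbert space $\mathscr{M}$, so $T|_\mathscr{M}$ is $k$-quasi $n$-power posinormal with the same $\lambda$.

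I expect the only genuine obstacle to be the bookkeeping distinguishing $A^{*n}$ from $T^{*n}$: one must resist treating $A^*$ as $T^*|_\mathscr{M}$ and instead extract the correct inequality $\lVert A^{*n} z\rVert \le \lVert T^{*n} z\rVert$ from the block lower-triangular form of $T^{*n}$. Once that identity $A^{*n} z = P T^{*n} z$ is in hand, the remainder is a routine application of the characterization from the preceding Proposition.
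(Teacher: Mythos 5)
Your proof is correct, and at heart it is the same compression argument the paper uses: restrict the defining positivity condition of $T$ to vectors of $\mathscr{M}$ and identify the resulting quantities with those of $A=T|_\mathscr{M}$. The difference is in execution, and yours is actually the more careful of the two. The paper sandwiches the operator inequality between projections and asserts the block identity $PT^{*k}T^nT^{*n}T^kP=\left[\begin{smallmatrix}A^{*k}A^nA^{*n}A^k&0\\0&0\end{smallmatrix}\right]$; this is an equality only when $B=0$, since for $z\in\mathscr{M}$ one has $T^{*n}z=A^{*n}z\oplus(\ast)z$ with a generally nonzero $\mathscr{M}^{\perp}$-component coming from the $B^*$ block, so in fact $\lVert T^{*n}z\rVert^2=\lVert A^{*n}z\rVert^2+\lVert(\ast)z\rVert^2\geq\lVert A^{*n}z\rVert^2$ and only the inequality $PT^{*k}T^nT^{*n}T^kP\geq A^{*k}A^nA^{*n}A^k\oplus 0$ holds. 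Fortunately that inequality points in the direction needed (the term is being subtracted), so the paper's conclusion survives; your formulation via $A^{*n}z=PT^{*n}z$ and $\lVert PT^{*n}z\rVert\leq\lVert T^{*n}z\rVert$ makes exactly this point explicit and avoids the false equality. The other ingredients --- $T^jy=A^jy$ for $y\in\mathscr{M}$, the lower-triangular form of $T^{*n}$, and the reduction to the norm criterion $\lVert A^{*n}A^ky\rVert^2\leq\lambda^2\lVert A^{k+1}y\rVert^2$ from the preceding Proposition --- are all sound, and the same $\lambda$ indeed works for the restriction.
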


\begin{proof}
	Let $P$ be the projection of $\mathscr{H}$ onto $\mathscr{M}$. Decompose
		$$T= \left[\begin{array}{cc}
		A  & B  \\
		0 & C
	\end{array}\right]~~\text{on} ~~ \mathscr{M}\oplus \mathscr{M}^{\perp}.$$
	Put $A= T|_\mathscr{M}$ and we have  $A=TP=PTP.$
	Since $T$ is $k$-quasi $n$-power posinormal operator, we have
	$$T^{*k}(\lambda^2T^*T-T^nT^{*n})T^k \geq 0.$$
	Therefore,
	$$PT^{*k}(\lambda^2T^*T-T^nT^{*n})T^kP\geq 0.$$
Since  $\lambda^2PT^{*k}T^*TT^kP = \left[\begin{array}{cc}
		\lambda^2 A^{*k}A^*AA^k & 0 \\
		0 & 0
	\end{array}\right]$ and \\
	$PT^{*k}T^nT^{*n}T^kP = \left[\begin{array}{cc}
		A^{*k}A^nA^{*n}A^k & 0 \\
		0 & 0
	\end{array}\right]$, it follows that
	$$ \lambda^2 A^{*k}A^*AA^k- A^{*k}A^nA^{*n}A^k  \geq  0 $$
	and so  $A=T|_\mathscr{M}$ is $k$-quasi $n$-power posinormal operator.

\end{proof}
\begin{example}\label{ex:prop2.6}
Consider the operator $T \in \mathscr{B}(\mathbb{C}^4)$ represented by the matrix:
\[
T = \begin{pmatrix}
1 & 1 & 0 & 0 \\
0 & 2 & 0 & 0 \\
0 & 0 & 0 & 1 \\
0 & 0 & 0 & 0
\end{pmatrix}
\]
with respect to the standard basis $\{e_1, e_2, e_3, e_4\}$.

For $k=1$ and $n=2$, we verify that $T$ is $1$-quasi $2$-power posinormal by checking:
\[
T^{*1}(\lambda^2 T^*T - T^2T^{*2})T^1 \geq 0
\]
with $\lambda = 3$. Direct computation shows:
\[
T^*T = \begin{pmatrix}
1 & 1 & 0 & 0 \\
1 & 5 & 0 & 0 \\
0 & 0 & 0 & 0 \\
0 & 0 & 0 & 1
\end{pmatrix}, \quad
T^2T^{*2} = \begin{pmatrix}
5 & 10 & 0 & 0 \\
10 & 20 & 0 & 0 \\
0 & 0 & 0 & 0 \\
0 & 0 & 0 & 0
\end{pmatrix}
\]
and the positivity condition holds for $\lambda = 3$.

Now consider the $T$-invariant subspace $\mathscr{M} = \operatorname{span}\{e_1, e_2\}$. The restriction $T|_{\mathscr{M}}$ has matrix representation:
\[
T|_{\mathscr{M}} = \begin{pmatrix}
1 & 1 \\
0 & 2
\end{pmatrix}
\]

We verify that $T|_{\mathscr{M}}$ remains $1$-quasi $2$-power posinormal:
\[
(T|_{\mathscr{M}})^{*1}(9 (T|_{\mathscr{M}})^*(T|_{\mathscr{M}}) - (T|_{\mathscr{M}})^2(T|_{\mathscr{M}})^{*2})(T|_{\mathscr{M}})^1 = \begin{pmatrix}
8 & 8 \\
8 & 25
\end{pmatrix} \geq 0
\]
which is positive definite since its determinant is $8 \times 25 - 8 \times 8 = 136 > 0$.

This illustrates Proposition 2.6, showing that the restriction of a $k$-quasi $n$-power posinormal operator to an invariant subspace maintains this property.
\end{example}


\begin{proposition}
	If $T$ is a $k$-quasi $n$-power posinormal operator  and if $T$ commutes with an isometric operator $S$, then $TS$ is a $k$-quasi $n$-power posinormal operator.
\end{proposition}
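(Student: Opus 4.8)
The plan is to reduce the operator inequality defining $k$-quasi $n$-power posinormality to the norm characterization of Proposition 2.4, namely that an operator $R\in\bh$ lies in the class precisely when $\norm{R^{*n}R^k x}^2 \le \lambda^2\norm{R^{k+1}x}^2$ for all $x\in\h$. Writing $R := TS$ and using the commutativity $TS = ST$, I would first record that powers of $R$ factor cleanly: $R^m = T^m S^m$, and hence $R^{*m} = S^{*m}T^{*m}$, for every $m$. In particular $R^{k+1} = T^{k+1}S^{k+1}$ and $R^{*n}R^k = S^{*n}T^{*n}T^k S^k$. The goal then becomes the single norm estimate $\norm{R^{*n}R^k x} \le \lambda\norm{R^{k+1}x}$, with the same $\lambda$ furnished by the hypothesis on $T$.

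Second, I would establish this estimate through a chain of three inequalities. Since $S$ is an isometry, $S^*$ is a contraction, so $\norm{S^{*n}}\le 1$ and therefore $\norm{R^{*n}R^k x} = \norm{S^{*n}(T^{*n}T^k S^k x)} \le \norm{T^{*n}T^k S^k x}$. Next I apply the hypothesis on $T$ — its own norm characterization from Proposition 2.4 — to the vector $y := S^k x$, obtaining $\norm{T^{*n}T^k S^k x} \le \lambda\norm{T^{k+1}S^k x}$. Finally, because $S$ commutes with $T$ (hence with $T^{k+1}$ and with $S^k$) and is norm-preserving, I rewrite $T^{k+1}S^{k+1}x = S\,(T^{k+1}S^k x)$ and use $\norm{Sz} = \norm{z}$ to conclude $\norm{T^{k+1}S^k x} = \norm{T^{k+1}S^{k+1}x} = \norm{R^{k+1}x}$. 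Concatenating the three steps gives $\norm{R^{*n}R^k x} \le \lambda\norm{R^{k+1}x}$, and Proposition 2.4 then yields that $TS$ is $k$-quasi $n$-power posinormal.

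The main obstacle is the asymmetry between $S$ and $S^*$: an isometry satisfies $S^*S = I$ but in general $SS^* \neq I$, so one cannot simplify $R^n R^{*n} = T^n S^n S^{*n}T^{*n}$ by collapsing $S^n S^{*n}$ to the identity, and a head-on manipulation of the operator inequality would stall at exactly this point. The resolution is to pass to the norm formulation, where the two roles of $S$ can be exploited separately — contractivity of $S^*$ to discard the left factor $S^{*n}$, and the norm-preserving property of $S$ to promote $S^k$ to $S^{k+1}$ on the right. A secondary subtlety to handle carefully is that $TS = ST$ does not by itself grant commutation of $T$ with $S^*$; the argument above is deliberately arranged to avoid ever needing that, relying only on the commutation of $T$ with $S$ together with $\norm{S^{*n}} \le 1$.
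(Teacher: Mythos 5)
Your proof is correct, and it takes a genuinely different route from the paper's. The paper works directly with the operator inequality: it expands $A^{*k}(\lambda^2 A^*A-A^nA^{*n})A^k$ for $A=TS$, rearranges using $TS=ST$ and $T^*S^*=S^*T^*$, and then collapses both $S^*S$ and $S^nS^{*n}$ to the identity before factoring the result as $S^{*k}\bigl(T^{*k}[\lambda^2T^*T-T^nT^{*n}]T^k\bigr)S^k\geq 0$. In doing so it explicitly asserts $SS^*=I$, which is the defining property of a co-isometry, not of an isometry; for a genuine isometry $S^nS^{*n}$ is only a projection, so that step as written is the precise gap you flag in your final paragraph. (The paper's argument is salvageable without changing route: $S^nS^{*n}\leq I$ gives $T^nS^nS^{*n}T^{*n}\leq T^nT^{*n}$, the inequality points the right way, and conjugation by $T^k$ and then $S^k$ preserves positivity.) Your approach instead passes to the norm characterization of Proposition 2.4 and splits the two roles of $S$ cleanly — contractivity of $S^{*n}$ on the left, norm preservation of $S$ to promote $S^k$ to $S^{k+1}$ on the right — which never invokes $SS^*=I$ and so proves the stated result for arbitrary isometries with no repair needed. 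What the paper's computation buys is a one-display verification (valid as stated when $S$ is unitary); what yours buys is correctness at the stated level of generality and a transparent accounting of exactly which properties of $S$ are used.
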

\begin{proof}
	Let $A=TS$.  We have, $TS=ST$, $T^*S^*=S^*T^*$ and $SS^*=I$.
	Now, $T$ is a $k$-quasi $n$-power posinormal operator, as so
	\begin{center}
		$T^{*k}( \lambda^2 T^*T-T^nT^{*n})T^k \geq 0.$
	\end{center}
	Now, \begin{align*}
		A^{*k}( \lambda^2 A^*A-A^nA^{*n})A^k
		&= (TS)^{* k}[\lambda^2(TS)^*(TS)-(TS)^n(TS)^{*n}](TS)^k\\
		&=(S^*T^*)^k[\lambda ^2 S^*T^*TS-T^nS^n(S^*T^*)^n]T^kS^k\\
		&= S^{*k}T^{*k}[\lambda^2T^*S^*ST-T^nS^nS^{*n}T^{*n}]T^kS^k\\
		&=S^{*k}T^{*k}[\lambda^2T^*T-T^nT^{*n}]T^kS^k\\
		&\geq 0.
	\end{align*}Therefore, $TS$ is $k$-quasi $n$-power posinormal operator.
\end{proof}


\begin{proposition}
	If $T$ is a $k$-quasi $n$-power posinormal operator and if $T$ is unitarily equivalent to operator $S$, then $S$ is a $k$-quasi $n$-power posinormal operator.
\end{proposition}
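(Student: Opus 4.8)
The plan is to exploit the fact that unitary equivalence intertwines every word in $T$ and $T^*$ with the corresponding word in $S$ and $S^*$, together with the fact that conjugation by a unitary preserves positivity. First I would fix a unitary $U \in \bh$ implementing the equivalence, say $S = U^*TU$, so that $U^*U = UU^* = I$. Taking adjoints gives $S^* = U^*T^*U$, and a trivial induction using $UU^* = I$ then yields $S^m = U^*T^mU$ and $S^{*m} = U^*T^{*m}U$ for every $m \in \Natural$.

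Next I would substitute these identities into the defining expression for $S$. Computing $S^*S = U^*T^*UU^*TU = U^*T^*TU$ and $S^nS^{*n} = U^*T^nT^{*n}U$, the inner unitary factors cancel, so that $\lambda^2 S^*S - S^nS^{*n} = U^*(\lambda^2 T^*T - T^nT^{*n})U$. Sandwiching this between $S^{*k} = U^*T^{*k}U$ and $S^k = U^*T^kU$ and cancelling the inner unitaries once more, I would arrive at the key identity
$$S^{*k}(\lambda^2 S^*S - S^nS^{*n})S^k = U^*\,T^{*k}(\lambda^2 T^*T - T^nT^{*n})T^k\,U,$$
valid for the very same $\lambda > 0$ that witnesses the $k$-quasi $n$-power posinormality of $T$.

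Finally I would invoke the elementary fact that $U^*AU \geq 0$ whenever $A \geq 0$, which is immediate from $\langle U^*AUx, x\rangle = \langle A(Ux), Ux\rangle \geq 0$ for all $x \in \mathscr{H}$. Applying this to $A = T^{*k}(\lambda^2 T^*T - T^nT^{*n})T^k \geq 0$ shows the right-hand side, and hence the left-hand side, is positive, establishing that $S$ is $k$-quasi $n$-power posinormal with the same constant $\lambda$. No genuine obstacle arises here: the argument is purely formal bookkeeping of the unitary factors, and the only conceptual point — that congruence by a unitary preserves the operator order — is standard. The sole care needed is to record that the same $\lambda$ transfers through the equivalence, so that the witnessing constant is not lost.
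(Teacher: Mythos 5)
Your proposal is correct and follows essentially the same route as the paper: write $S=U^*TU$, cancel the inner unitary factors to obtain $S^{*k}(\lambda^2 S^*S-S^nS^{*n})S^k=U^*T^{*k}(\lambda^2 T^*T-T^nT^{*n})T^kU$, and conclude by the positivity of unitary congruence. Your write-up is in fact slightly cleaner, as it makes explicit both the induction step $S^m=U^*T^mU$ and the preservation of the witnessing constant $\lambda$.
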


\begin{proof} Suppose $T$ is unitarily equivalent to operator $S$. That is, $S=U^*TU$, where $U$ is unitary operator. Since $T$ is  $k$-quasi $n$-power posinormal, $$T^{*k}( \lambda^2 T^*T-T^nT^{*n})T^k \geq 0.$$ \\
	
	Now, \begin{align*}
		S^{*k}( \lambda^2 S^*S-S^nS^{*n})S^k
		&= (U^*TU)^{*k}[\lambda^2(U^*TU)^*(U^*TU)-(U^*TU)^n(U^*TU)^{*n}](U^*TU)^k\\
		&= (U^*T^*U)^k[\lambda^2U^*T*UU^*TU-U^*T^nU(U^*T^*U)^n]U^*T^kU\\
		&= U^*T^{*k}U[\lambda^2U^*T^*TU-U^*T^nT^{*n}U]U^*T^kU\\
		&= U^*T^{*k}[\lambda^2UU^*T^*TUU^*-UU^*T^nT^{*n}UU^*]T^kU\\
		&= U^*T^{*k}[\lambda^2T^*T-T^nT^{*n}]T^kU\\
		&\geq0.
	\end{align*}
	This completes the proof.
\end{proof}


\begin{proposition}
	Let $T$ is a $k$-quasi $n$-power posinormal operator. If $T^k$ has dense range, then $T$ is an $n$-power posinormal operator.
\end{proposition}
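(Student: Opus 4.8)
The plan is to recognize that the defining inequality for $k$-quasi $n$-power posinormality is precisely the statement that the quadratic form of the bounded self-adjoint operator $P := \lambda^2 T^*T - T^nT^{*n}$ is non-negative on the range of $T^k$, and then to promote this to positivity on all of $\mathscr{H}$ by a density-plus-continuity argument. Explicitly, first I would set $P := \lambda^2 T^*T - T^nT^{*n}$, note that $P = P^* \in \bh$, and rewrite the hypothesis $T^{*k}PT^k \geq 0$ via the adjoint identity $\langle T^{*k}PT^k x, x\rangle = \langle P T^k x, T^k x\rangle$ to obtain
$$\langle P T^k x, T^k x\rangle \geq 0 \quad \text{for all } x \in \mathscr{H}.$$
This says exactly that $\langle Py, y\rangle \geq 0$ for every $y \in \r(T^k)$.

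Second, I would invoke the hypothesis that $T^k$ has dense range, so that $\overline{\r(T^k)} = \mathscr{H}$. Given an arbitrary $z \in \mathscr{H}$, I would choose a sequence $y_m \in \r(T^k)$ with $y_m \to z$. Since $P$ is bounded, the map $y \mapsto \langle Py, y\rangle$ is continuous, so $\langle P y_m, y_m\rangle \to \langle Pz, z\rangle$. Each term on the left is non-negative, hence so is the limit, giving $\langle Pz, z\rangle \geq 0$. As $z$ was arbitrary, $P \geq 0$, i.e. $\lambda^2 T^*T - T^nT^{*n}\geq 0$, which is precisely the definition of an $n$-power posinormal operator, with the very same $\lambda$.

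The argument contains no genuine obstacle: the only point requiring care is the passage from a dense subspace to the whole space, and this is guaranteed by the continuity of the quadratic form of the bounded operator $P$ combined with the density hypothesis. I would emphasize that \emph{denseness} of $\r(T^k)$ — rather than surjectivity or closedness of $T^k$ — is exactly what makes the argument work, since continuity is what allows the inequality to survive the limit; no further structural information about $T^k$ is needed, and the constant $\lambda$ is inherited unchanged.
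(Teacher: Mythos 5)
Your proposal is correct and follows essentially the same route as the paper's own proof: both rewrite the hypothesis as non-negativity of the quadratic form of $\lambda^2 T^*T - T^nT^{*n}$ on the range of $T^k$, and then pass to all of $\mathscr{H}$ by approximating an arbitrary vector with a sequence $T^k x_m$ and using continuity of the form. Your write-up is in fact slightly cleaner in that it makes the continuity step explicit, but there is no substantive difference.
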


\begin{proof}
	Since $T^k$ has dense range, $\overline{T^k(\mathscr{H})}= \mathscr{H}$.
	Let $y \in \mathscr{H}$, then there exist  a sequence $\{x_m\}_{m=1}^{\infty}$ in $\mathscr{H}$
	such that, $T^k(x_m) \to y$ as $m\to \infty$.
	Now, $$T^{*k}(\lambda^2T^*T-T^nT^{*n})T^k \geq 0$$
	for some $\lambda >0$. Therefore,
	$$ \langle T^{*k}(\lambda^2T^*T-T^nT^{*n})T^kx_m,x_m \rangle ~~\geq 0$$
	for all $m \in \mathbb{N}.$ That is, $$ \langle (\lambda^2T^*T-T^nT^{*n})T^kx_m,T^kx_m \rangle ~~\geq 0,$$
	for all $m \in \mathbb{N}.$ Then, we have,
	$$ \langle (\lambda^2T^*T-T^nT^{*n})y,y \rangle ~~ \geq0,~~ \text{for all}~~ y \in \mathscr{H}.$$ Hence,  $$\lambda^2T^*T-T^nT^{*n}\geq 0.$$ This completes the proof.

\end{proof}


\begin{theorem}
	Let $T\in \bh$ be a $k$-quasi $n$-power posinormal operator and the range of $T^k$ be not dense in $\mathscr{H}$. Then
	$$T= \left [\begin{array}{cc}
		A & B\\
		0 & C
	\end{array}\right] ~~\text{on}~~  \mathscr{H}= \overline{T^k(\mathscr{H})}\oplus ker(T^{*k}),$$
where $A$ is an $n$-power posinormal operator on $\overline{T^k(\mathscr{H})}$, $C$ is nilpotent operator of order $k$. Moreover, $\sigma(T)=\sigma(A) \cup \set{0}$.
\end{theorem}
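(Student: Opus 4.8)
The plan is to work with the orthogonal decomposition $\mathscr{H}=\mathscr{M}\oplus\mathscr{M}^{\perp}$, where $\mathscr{M}:=\overline{T^k(\mathscr{H})}$. First I would record that $\mathscr{M}^{\perp}=\ker(T^{*k})$, since $\overline{\mathrm{ran}(T^k)}^{\perp}=\ker((T^k)^{*})=\ker(T^{*k})$, and that $\mathscr{M}^{\perp}\neq\{0\}$ precisely because $T^k$ has non-dense range. As $T(\mathscr{M})=T\big(\overline{T^k(\mathscr{H})}\big)\subseteq\overline{T^{k+1}(\mathscr{H})}\subseteq\overline{T^{k}(\mathscr{H})}=\mathscr{M}$, the subspace $\mathscr{M}$ is $T$-invariant, so with $P$ the orthogonal projection onto $\mathscr{M}$ the operator acquires the upper-triangular block form $T=\left[\begin{smallmatrix}A & B\\ 0 & C\end{smallmatrix}\right]$ with $A=PT|_{\mathscr{M}}$ and $C=(I-P)T|_{\mathscr{M}^{\perp}}$; this is exactly the claimed representation.

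For the $n$-power posinormality of $A$ I would start from the defining inequality $T^{*k}(\lambda^2T^*T-T^nT^{*n})T^k\geq 0$, equivalently $\langle(\lambda^2T^*T-T^nT^{*n})T^kx,T^kx\rangle\geq 0$ for all $x\in\mathscr{H}$ (Proposition 2.4). Since $T^k(\mathscr{H})$ is dense in $\mathscr{M}$ and the sesquilinear form is continuous, this passes to the limit to give $\langle(\lambda^2T^*T-T^nT^{*n})y,y\rangle\geq 0$ for every $y\in\mathscr{M}$. Writing $T^n=\left[\begin{smallmatrix}A^n & B_n\\ 0 & C^n\end{smallmatrix}\right]$ and restricting to $y\in\mathscr{M}$, I would use $\langle T^*Ty,y\rangle=\|Ay\|^2=\langle A^*Ay,y\rangle$ together with $\langle T^nT^{*n}y,y\rangle=\|T^{*n}y\|^2=\langle A^nA^{*n}y,y\rangle+\|B_n^{*}y\|^2$. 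Substituting yields $\langle(\lambda^2A^*A-A^nA^{*n})y,y\rangle\geq\|B_n^{*}y\|^2\geq 0$, so $A$ is $n$-power posinormal with the same $\lambda$. The point here is that the off-diagonal contribution enters with a favourable sign, so it only strengthens the inequality.

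The nilpotency of $C$ is the easiest step: since $\mathrm{ran}(T^k)\subseteq\mathscr{M}$ we have $(I-P)T^k=0$, and comparing with $T^k=\left[\begin{smallmatrix}A^k & B_k\\ 0 & C^k\end{smallmatrix}\right]$ forces $C^k=0$, so $C$ is nilpotent of order $k$.

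Finally, for $\sigma(T)=\sigma(A)\cup\{0\}$ I would combine three facts. Because $C^k=0$ on the nonzero space $\mathscr{M}^{\perp}$, $\sigma(C)=\{0\}$, and the standard spectral inclusion for triangular operator matrices gives $\sigma(T)\subseteq\sigma(A)\cup\sigma(C)=\sigma(A)\cup\{0\}$. Since $T^k$ lacks dense range, $T$ is not invertible, so $0\in\sigma(T)$. For the reverse inclusion I would fix $\lambda\neq 0$ with $\lambda\notin\sigma(T)$; then $C-\lambda$ is invertible, and solving $(T-\lambda)\binom{x}{y}=\binom{u}{0}$ componentwise forces $y=0$ and shows $A-\lambda$ is bijective, hence invertible, so $\lambda\notin\sigma(A)$. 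Thus $\sigma(A)\setminus\{0\}\subseteq\sigma(T)$, which together with $0\in\sigma(T)$ gives the equality. I expect this last inclusion to be the main obstacle: for a general triangular matrix $\sigma(A)$ need not lie in $\sigma(T)$, so it is essential to exploit the nilpotency of $C$, i.e. $\sigma(C)=\{0\}$, to force each nonzero point of $\sigma(A)$ into $\sigma(T)$.
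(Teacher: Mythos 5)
Your proposal is correct, and its overall architecture (the decomposition $\mathscr{H}=\overline{T^k(\mathscr{H})}\oplus\ker(T^{*k})$, the density-and-continuity argument to transfer the posinormality inequality to $A$, and the observation that the lower-right block of $T^k$ vanishes) matches the paper's proof. Two steps are genuinely different, and in both cases your version is the more careful one. First, for the $n$-power posinormality of $A$, the paper asserts the operator identities $T^{*}Tx=A^{*}Ax$ and $T^{n}T^{*n}x=A^{n}A^{*n}x$ for $x\in\overline{T^k(\mathscr{H})}$; the second of these is not literally true, since $T^{*n}$ maps $\mathscr{M}$ out of $\mathscr{M}$ and one only has $\langle T^{n}T^{*n}x,x\rangle=\langle A^{n}A^{*n}x,x\rangle+\Vert B_n^{*}x\Vert^2\geq\langle A^{n}A^{*n}x,x\rangle$. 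Your explicit bookkeeping of the cross term $\Vert B_n^{*}y\Vert^2$, which enters with the favourable sign, is exactly what is needed to make this step rigorous, so your argument repairs a small gap in the published proof. Second, for $\sigma(T)=\sigma(A)\cup\{0\}$ the paper invokes the Han--Lee--Lee ``filling in holes'' result ($\sigma(A)\cup\sigma(C)=\sigma(T)\cup\nu$ with $\nu$ contained in $\sigma(A)\cap\sigma(C)=\{0\}$ or $\emptyset$, which has empty interior), whereas you give a self-contained elementary argument: the standard inclusion $\sigma(T)\subseteq\sigma(A)\cup\sigma(C)$, the fact that $0\in\sigma(T)$ because $T^k$ (hence $T$) cannot be invertible, and the componentwise solution of $(T-\lambda)\binom{x}{y}=\binom{u}{0}$ using the invertibility of $C-\lambda$ for $\lambda\neq 0$ to show $\sigma(A)\setminus\{0\}\subseteq\sigma(T)$. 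Both routes are valid; yours avoids an external citation at the cost of a few extra lines, and correctly identifies that the nilpotency of $C$ is what makes the reverse spectral inclusion work.
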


\begin{proof}
	Suppose that $T \in \bh$ is a $k$-quasi $n$-power posinormal operator on $\mathscr{H}$. Consider $\mathscr{H}= \overline{T^k(\mathscr{H})}\oplus ker(T^{*k})$, since $ \overline{T^k(\mathscr{H})}$ is an invariant subspace of $T$,  $T$ has the matrix representation	$$T= \left [\begin{array}{cc}
		A & B\\
		0 & C
	\end{array}\right]$$
	with respect to  $\mathscr{H}= \overline{T^k(\mathscr{H})}\oplus ker(T^{*k})$. Let $P$ be orthogonal  projection of $\mathscr{H}$ onto $\overline{T^k(\mathscr{H})}$. Then
$A:= PT=PTP$  and $A^{*}A=PT^{*}TP$. \\
Therefore, for $x \in \overline{T^k(\mathscr{H})}$, we get $T^*Tx= A^*Ax $
	 and $T^{n}T^{*n}x =A^{n}A^{*n}x.$
Since, $T$ is a $k$-quasi $n$-power posinormal operator, we have,
	$$T^{*k}(\lambda^2 TT^*-T^nT^{*n})T^k \geq 0.$$

Now,	for $x \in \overline{T^k(\mathscr{H})}$, there exists $\{y_m\}_{m=1}^{\infty}$ in $\mathscr{H}$ such that $T^k(y_m) \to x$ as $m\to \infty$.

\begin{align*}		
 \langle (\lambda^2A^*A-A^nA^{*n})x,x\rangle
 &= \langle (\lambda^2T^*T-T^nT^{*n})x,x\rangle \\
 &=  \langle (\lambda^2T^*T-T^nT^{*n}) \lim\limits_{m \to \infty} T^k y_m, \lim\limits_{m \to \infty}T^k y_m\rangle \\
  &= \lim\limits_{m \to \infty} \langle (\lambda^2T^*T-T^nT^{*n})T^ky_m,T^k y_m\rangle\\
 &= \lim\limits_{m \to \infty} \langle T^{*k}(\lambda^2T^*T-T^nT^{*n})T^ky_m,y_m\rangle\\
 &\geq0.
 \end{align*}

	Hence, $$\lambda^2A^*A-A^nA^{*n}~\geq0~~ \text{on} ~~\overline{T^k(\mathscr{H})}.$$
	and so $A$ is an $n$-power posinormal operator on $\overline{T^k(\mathscr{H})}$.\\\\
On the other hand, if  $x=\left (\begin{array}{c}
		x_1\\
		x_2
	\end{array}\right)\in \mathscr{H}= \overline{T^k(\mathscr{H})} \oplus kerT^{*k}$, then
	$$C^kx_2=T^kx_2=T^k(I-P)x.$$
	Therefore,
	$$\langle C^kx_2,x_2 \rangle= \langle T^k(I-P)x,(I-P)x \rangle =\langle (I-P)x,T^{*k}(I-P)x \rangle =0$$ since, $T^k(I-P)x=0$ and
which implies that $C^k=0$.
It is well known that  $\sigma (A) \cup \sigma (C)=\sigma(T) \cup \nu $, $\nu$ is the union of the holes in $\sigma (T)$, which is a subset of $\sigma (A) \cap \sigma(C)$ \cite[Corollary 7]{Han}.
	Since $\sigma (A) \cap \sigma(C)$ has no interior points, we have
	$$\sigma(T)=\sigma(A) \cup \sigma (C)= \sigma(A)\cup \{0\}.$$
	This completes the proof.
\end{proof}
\begin{example}\label{ex:thm2.10}
Consider the operator $T \in \mathscr{B}(\mathbb{C}^4)$ defined by the matrix:
\[
T = \begin{pmatrix}
2 & 1 & 0 & 0 \\
0 & 1 & 0 & 0 \\
0 & 0 & 0 & 1 \\
0 & 0 & 0 & 0
\end{pmatrix}
\]
with respect to the standard basis $\{e_1,e_2,e_3,e_4\}$.

For $k=1$ and $n=2$, we verify that:
\begin{enumerate}
\item $T$ is $1$-quasi $2$-power posinormal with $\lambda=3$:
\[
T^*(\lambda^2 T^*T - T^2T^{*2})T =
\begin{pmatrix}
12 & 6 & 0 & 0 \\
6 & 3 & 0 & 0 \\
0 & 0 & 0 & 0 \\
0 & 0 & 0 & 0
\end{pmatrix} \geq 0
\]

\item The range $\overline{T^1(\mathscr{H})} = \operatorname{span}\{e_1,e_2\}$ is not dense

\item $ker(T^*) = \operatorname{span}\{e_3,e_4\}$
\end{enumerate}

The operator decomposes as:
\[
T = \begin{pmatrix}
A & B \\
0 & C
\end{pmatrix} = \left[\begin{array}{cc|cc}
2 & 1 & 0 & 0 \\
0 & 1 & 0 & 0 \\ \hline
0 & 0 & 0 & 1 \\
0 & 0 & 0 & 0
\end{array}\right]
\]
where:
\begin{itemize}
\item $A = \begin{pmatrix} 2 & 1 \\ 0 & 1 \end{pmatrix}$ is $2$-power posinormal on $\overline{T(\mathscr{H})}$

\item $C = \begin{pmatrix} 0 & 1 \\ 0 & 0 \end{pmatrix}$ is nilpotent ($C^2=0$)

\item The spectrum satisfies $\sigma(T)=\{0,1,2\} = \sigma(A) \cup \{0\}$
\end{itemize}

This decomposition exactly matches the structure predicted by Theorem 2.10.
\end{example}



Let $T_1: \mathscr{H}_1 \to \mathscr{H}_1$ and $T_2: \mathscr{H}_2 \to \mathscr{H}_2$ be bounded linear operators on $\mathscr{H}_1$ and $\mathscr{H}_2$ respectively. Then the tensor product of $T_1$ with $T_2$, is the bounded linear operator $T_1 \otimes T_2: \mathscr{H}_1 \otimes \mathscr{H}_2 \to \mathscr{H}_1 \otimes \mathscr{H}_2 $, defined as,
$(T_1 \otimes T_2)(x \otimes y)=T_1x \otimes T_2y.$
If $S_1, T_1 \in B(\mathscr{H}_1)$ and $S_2, T_2 \in B(\mathscr{H}_2)$, then
 $$(S_1+T_1)\otimes S_2=S_1\otimes S_2+T_1\otimes S_2,$$
$$(S_1\otimes S_2)(T_1\otimes T_2)=S_1T_1\otimes S_2T_2 $$
 $$~~\text{and} ~~(S_1\otimes S_2)^*=S_1^*\otimes S_2^*$$.


\begin{theorem}
	Let $T, S\in \bh$ be non zero operators. Then $T\otimes S$ is $k$-quasi n-power posinormal operator if $T$ and $S$ are $k$-quasi n-power posinormal operators.

\end{theorem}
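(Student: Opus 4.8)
The plan is to reduce the tensor-product statement to the operator inequality defining the class, and then to use the multiplicative compatibility of the tensor product with adjoints and products. By definition, $T$ being $k$-quasi $n$-power posinormal means there is $\lambda_1>0$ with $T^{*k}(\lambda_1^2 T^*T - T^nT^{*n})T^k\geq 0$, and similarly $S^{*k}(\lambda_2^2 S^*S - S^nS^{*n})S^k\geq 0$ for some $\lambda_2>0$. I want to produce a single $\lambda>0$ witnessing the analogous inequality for $A:=T\otimes S$. The first step is purely algebraic: using $(T\otimes S)^*=T^*\otimes S^*$, $(T\otimes S)^j=T^j\otimes S^j$, and the product rule $(T_1\otimes S_1)(T_2\otimes S_2)=T_1T_2\otimes S_1S_2$, I would expand
\begin{align*}
A^{*k}(\lambda^2 A^*A - A^nA^{*n})A^k
&= \lambda^2 (T^{*k}T^*TT^k)\otimes(S^{*k}S^*SS^k)\\
&\quad - (T^{*k}T^nT^{*n}T^k)\otimes(S^{*k}S^nS^{*n}S^k).
\end{align*}
For brevity set $P:=T^{*k}T^*TT^k\geq 0$, $Q:=T^{*k}T^nT^{*n}T^k\geq 0$ and $R:=S^{*k}S^*SS^k\geq 0$, $W:=S^{*k}S^nS^{*n}S^k\geq 0$, so the target is to show $\lambda^2\,P\otimes R - Q\otimes W\geq 0$, given $\lambda_1^2 P\geq Q$ and $\lambda_2^2 R\geq W$.

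The second step is the genuine content: deducing the tensor inequality from the two factor inequalities. The clean way is to write, for any $\lambda$,
\begin{align*}
\lambda^2 P\otimes R - Q\otimes W
&= (\lambda^2 P - \tfrac{\lambda^2}{\lambda_1^2}Q)\otimes R
+ \tfrac{\lambda^2}{\lambda_1^2}Q\otimes R - Q\otimes W,
\end{align*}
and then choose $\lambda$ to make both pieces manifestly positive. Concretely, I would take $\lambda=\lambda_1\lambda_2$. Then the first summand is $(\lambda_1\lambda_2)^2 P\otimes R - \lambda_2^2 Q\otimes R = \lambda_2^2(\lambda_1^2 P - Q)\otimes R$, which is a tensor product of two positive operators (since $\lambda_1^2 P - Q\geq 0$ and $R\geq 0$) and hence positive. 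The remaining summand is $\lambda_2^2 Q\otimes R - Q\otimes W = Q\otimes(\lambda_2^2 R - W)$, again a tensor product of positives since $Q\geq 0$ and $\lambda_2^2 R - W\geq 0$. Summing two positive operators gives a positive operator, which is exactly the desired inequality with $\lambda=\lambda_1\lambda_2$.

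The main obstacle to state carefully is the fact that the tensor product of two positive operators is positive, and that this extends to the completion $\mathscr{H}_1\otimes\mathscr{H}_2$; I would justify it by noting that if $X\geq 0$ and $Y\geq 0$ then $X=B^*B$ and $Y=D^*D$, whence $X\otimes Y=(B\otimes D)^*(B\otimes D)\geq 0$, together with the linearity that makes the above additive splitting valid. A secondary point worth flagging is that the displayed expansion requires the elementary algebraic identities for $\otimes$ quoted before the theorem, applied to powers and adjoints; these are routine but should be invoked explicitly. One should also record that the hypotheses furnish possibly different constants $\lambda_1,\lambda_2$, so the role of the proof is precisely to combine them into the single constant $\lambda=\lambda_1\lambda_2$; I expect no analytic difficulty beyond this bookkeeping, since everything reduces to positivity of sums and tensor products of positive operators.
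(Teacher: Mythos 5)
Your proposal is correct and follows essentially the same route as the paper: you choose the witness $\lambda=\lambda_1\lambda_2$, add and subtract the cross term $\lambda_2^2\,Q\otimes R$, and write the expression as a sum of two tensor products of positive operators, which is exactly the paper's decomposition. Your explicit justification that $X\otimes Y\geq 0$ for $X,Y\geq 0$ via $X=B^*B$, $Y=D^*D$ is a small but welcome addition that the paper leaves implicit.
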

\begin{proof}
	Suppose $T^{*k}[\lambda ^2 T^*T-T^nT^{*n}]T^k \geq 0$ and $S^{*k}[\mu ^2 S^*S-S^nS^{*n}]S^k \geq 0$ for some $\lambda, \mu >0$.\\\\
	Now,\\
	
	$(T\otimes S)^{*k}[\lambda^2\mu^2(T\otimes S)^*(T\otimes S)-(T\otimes S)^n(T\otimes S)^{*n}](T\otimes S)^k $ \\
	
	$=(T^{*k}\otimes S^{*k})[\lambda^2 \mu^2(T^*T\otimes S^*S)-(T^n T^{*n}\otimes S^n S^{*n})](T^k\otimes S^k) $\\
	
	$=\lambda^2\mu^2(T^{*k}T*T T^k\otimes S^{*k}S^*SS^k)-(T^{*k}T^n T^{*n}T^k\otimes S^{*k}S^n S^{*n}S^k) $ \\
	
	$=\lambda^2 \mu^2 T^{*k}T^*T T^k \otimes S^{*(k+1)}S^{k+1} - \mu^2 T^{*k}T^n T^{*n}T^k \otimes S^{*(k+1)}S^{k+1} $ \\
	
	$+ \mu^2 T^{*k}T^n T^{*n}T^k \otimes S^{*(k+1)}S^{k+1} - T^{*k}T^n T^{*n}T^k \otimes S^{*k}S^n S^{*n}S^k $\\
	
	$=\mu^2T^{*k}[\lambda^2T^*T-T^n T^{*n}]T^k\otimes S^{*k}S^*SS^k +T^{*k}T^n T^{*n}T^k\otimes S^{*k}(\mu^2S^*S-S^n S^{*n})S^k $\\
	
	$\geq 0.$\\\\
	Thus, $T\otimes S$ is $k$-quasi $n$-power posinormal operator .
\end{proof}
\section{The weighted conditional type operators }
In this section, we focus our investigation on the class of $k$-quasi $n$-power posinormal operators within the specific context of weighted conditional type operators. These operators, defined on $L^2(\Sigma)$ spaces, play a significant role in various areas of functional analysis and probability theory.

The weighted conditional type operator $\tTwu = M_wEM_u$, where $w,u$ are measurable weight functions and $E$ represents the conditional expectation operator, has been extensively studied in recent literature \cite{Azimi, Estaremi1, Estaremi2}. Our primary objective is to establish necessary and sufficient conditions under which these operators belong to the $k$-quasi $n$-power posinormal class.

We begin by recalling several key results from \cite{EJ} that will be fundamental to our analysis. These include important properties of the polar decomposition and norm characterizations of $\tTwu$. Building upon these foundations, we will:

\begin{itemize}
    \item Establish precise conditions for the posinormality of $\tTwu$ operators
    \item Extend these results to the $n$-power posinormal case
    \item Derive complete characterizations for the $k$-quasi $n$-power posinormal case
\end{itemize}

Our approach combines techniques from operator theory with measure-theoretic considerations, allowing us to obtain concrete criteria expressed in terms of the weight functions $w,u$ and their conditional expectations. The results in this section not only contribute to the abstract theory of posinormal operators but also have potential applications in the study of stochastic processes and functional calculus.

\begin{lemma}\label{Weighted1}\cite{EJ}.
  Let $T_{w,u}= M_{w}EM_{u}$ be a bounded operator on $L^2(\Sigma)$ and $m\in (0, \infty)$. Then
   $(\tTwu^*\tTwu)^m=M_{\bar{u}(E(|u|^2))^{m-1}\chi_{S}(E(|w|^2))^m}EM_{u}$ and
   $(\tTwu\tTwu^*)^m=M_{w(E(|w|^2))^{m-1}\chi_{G}(E(|u|^2))^m}EM_{\bar{w}}$,
where $S=S(E(|u|^2))$ and $G=S(E(|w|^2))$.
\end{lemma}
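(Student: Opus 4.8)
The plan is to reduce everything to the single identity for $\tTwus\tTwu$ and then obtain the companion identity for $\tTwu\tTwus$ by a duality. First I would compute the adjoint: since $E$ is a self-adjoint idempotent on $L^2(\Sigma)$ and $M_u^*=M_{\bar u}$, $M_w^*=M_{\bar w}$, one gets $\tTwus=M_{\bar u}EM_{\bar w}$, equivalently $\tTwus=T_{\bar u,\bar w}$. Composing, $\tTwus\tTwu=M_{\bar u}EM_{|w|^2}EM_u$, and applying this to $f$ while using property (i) to pull the $\A$-measurable factor $E(uf)$ through $E$, namely $E(|w|^2E(uf))=E(uf)\,E(|w|^2)$, I obtain $\tTwus\tTwu f=\bar u\,E(|w|^2)\,E(uf)$, which is the $m=1$ case. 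The factor $\chi_S$ with $S=S(E(|u|^2))$ may be inserted at no cost, because Hölder's inequality (property (iv) with $p=q=2$) gives $\abs{E(uf)}\le E(|u|^2)^{1/2}E(|f|^2)^{1/2}$, so $E(uf)$ vanishes off $S$; this insertion is in fact essential once $m<1$, since then $(E(|u|^2))^{m-1}$ is only meaningful on $S$.

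Next I would isolate the algebraic structure. For an $\A$-measurable $\psi$ put $B_\psi:=M_{\bar u\psi}EM_u$, so that $\tTwus\tTwu=B_{E(|w|^2)}$. A direct computation, again using property (i) to pull the $\A$-measurable function $\psi\chi E(uf)$ through $E$, yields the composition rule $B_\psi B_\chi=B_{\psi\chi E(|u|^2)}$. Iterating gives, for every positive integer $m$, $B_\psi^{\,m}=B_{\psi^m(E(|u|^2))^{m-1}}$, and in particular $(\tTwus\tTwu)^m=B_{(E(|w|^2))^m(E(|u|^2))^{m-1}}$, the asserted formula for integer $m$.

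The genuinely delicate point, which I expect to be the main obstacle, is passing from integer $m$ to arbitrary $m\in(0,\infty)$, where $(\tTwus\tTwu)^m$ must be read through the Borel functional calculus for the positive operator $A:=\tTwus\tTwu$. The route I favor is to exhibit $A$ explicitly as a multiplication operator on an invariant subspace. Setting $\rho:=E(|u|^2)E(|w|^2)$ ($\A$-measurable and bounded by the boundedness hypothesis on $\tTwu$) and $\mathcal R:=\overline{\set{\bar u g:g\in L^2(\A)}}$, I would show that $Af=\bar u\,E(|w|^2)E(uf)$ forces $\r(A)\subseteq\mathcal R$, that $A(\bar u g)=\rho\,(\bar u g)$, and that $\mathcal R^{\perp}=\set{f:E(uf)=0}=\ker A$; hence $A=M_\rho\oplus 0$ with respect to $\h=\mathcal R\oplus\ker A$. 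The spectral theorem then gives $A^m=M_{\rho^m}\oplus 0$, and a short check shows the claimed operator $M_{\bar u(E(|u|^2))^{m-1}\chi_S(E(|w|^2))^m}EM_u$ agrees with it: it annihilates $\ker A$ (where $E(uf)=0$) and sends $\bar u g\mapsto\bar u(E(|u|^2)E(|w|^2))^m\chi_S\,g=\rho^m(\bar u g)$, using $\chi_S\rho^m=\rho^m$. An equivalent route, avoiding the explicit diagonalization, is to note that the composition rule forces the semigroup identity $C_mC_{m'}=C_{m+m'}$ for $C_m:=B_{(E(|w|^2))^m(E(|u|^2))^{m-1}\chi_S}$, deduce $C_{p/q}^{\,q}=A^p$ and hence $C_{p/q}=A^{p/q}$ by uniqueness of positive $q$-th roots, and extend to irrational $m$ by norm-continuity of $m\mapsto C_m$. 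Either way, the care needed to justify the decomposition $\h=\mathcal R\oplus\ker A$ and the boundedness and measurability of the resulting multipliers is the principal technical burden.

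Finally, the identity for $(\tTwu\tTwus)^m$ would follow without repeating the work, via the duality $\tTwus=T_{\bar u,\bar w}$ noted above: indeed $\tTwu\tTwus=(T_{\bar u,\bar w})^*\,T_{\bar u,\bar w}$, so applying the first identity with $(w,u)$ replaced by $(\bar u,\bar w)$ and observing that this interchanges $E(|w|^2)\leftrightarrow E(|u|^2)$ and $S\leftrightarrow G=S(E(|w|^2))$ produces exactly $(\tTwu\tTwus)^m=M_{w(E(|w|^2))^{m-1}\chi_G(E(|u|^2))^m}EM_{\bar w}$, completing the proof.
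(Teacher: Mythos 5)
The paper does not prove this lemma at all --- it is imported verbatim from \cite{EJ} with only a citation --- so there is no in-paper argument to compare against. Your proof is correct and self-contained, and it is worth having: the adjoint computation $\tTwus=T_{\bar u,\bar w}$, the composition rule $B_\psi B_\chi=B_{\psi\chi E(|u|^2)}$ for integer powers, the diagonalization of $A=\tTwus\tTwu$ as multiplication by $\rho=E(|u|^2)E(|w|^2)$ on $\mathcal R=\overline{\bar u\,L^2(\A)}$ and $0$ on $\mathcal R^{\perp}=\set{f:E(uf)=0}$, and the duality that converts the first identity into the second are all sound. Two small points deserve tightening. First, your assertion $\mathcal R^{\perp}=\ker A$ is an overstatement: one only gets $\mathcal R^{\perp}\subseteq\ker A$ in general (if $E(|w|^2)$ vanishes on part of $S$, then $\ker A$ meets $\mathcal R$ nontrivially), but only the inclusion and the $A$-invariance of $\mathcal R$ are actually used, so the decomposition $A=M_\rho\oplus 0$ on $\mathcal R\oplus\mathcal R^{\perp}$ survives. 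Second, to conclude that the claimed operator $M_{\bar u(E(|u|^2))^{m-1}\chi_S(E(|w|^2))^m}EM_u$ equals $A^m$ on all of $\mathcal R$ (not just on the dense set $\bar u\,L^2(\A)$) you should note that this operator is itself bounded; this follows from the conditional Cauchy--Schwarz estimate $|E(uf)|^2\le E(|u|^2)E(|f|^2)$, which bounds its norm by $\norm{\rho}_\infty^{m}$ and so lets you pass to the closure.
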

\begin{theorem}\cite{EJ}.
  The unique polar decomposition of bounded operator $T_{w,u}= M_{w}EM_{u}$ is $U|T_{w,u}|$,
   where $|T_{w,u}|(f)=\bra{\frac{E(|w|^2)}{E(|u|^2)}}^{\frac{1}{2}}\chi_S\bar{u}E(uf)$
  and $$U(f)=\bra{\frac{\chi_{S\cap G}}{E(|w|^2)E(|u|^2)}}^{\frac{1}{2}}wE(uf),$$
for all $f\in L^2(\Sigma)$.
\end{theorem}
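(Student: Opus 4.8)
The plan is to verify directly that the pair consisting of $U$ and $\abs{\tTwu}$ in the statement satisfies the three defining requirements of the canonical polar decomposition: that $\abs{\tTwu} = (\tTwus\tTwu)^{1/2}$, that $U$ is a partial isometry with $U\abs{\tTwu} = \tTwu$, and that $\ker U = \ker\tTwu$. Once these hold, the claim (uniqueness included) follows from the standard uniqueness theorem for polar decomposition. Throughout I would use that $E$ is the orthogonal projection of $L^2(\Sigma)$ onto $L^2(\A)$ (so $E^* = E$, $E^2 = E$), the pull-out property $E(gh) = gE(h)$ for $\A$-measurable $g$ (property (i)), and two support facts: (a) since $E(|w|^2) = 0$ off $G = S(E(|w|^2))$ forces $\int_B|w|^2\,d\mu = 0$ for every $\A$-measurable $B\subseteq G^c$, we get $w = 0$ a.e. off $G$, hence $\chi_G w = w$ and $S(w)\subseteq G$; and (b) by the conditional Cauchy--Schwarz inequality (property (iv)), $\abs{E(uf)}\le E(|u|^2)^{1/2}E(|f|^2)^{1/2}$ vanishes off $S = S(E(|u|^2))$, so $\chi_S E(uf) = E(uf)$ for every $f$.

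First I would read off $\abs{\tTwu}$ from Lemma \ref{Weighted1} with $m = \tfrac12$, which gives $(\tTwus\tTwu)^{1/2} = M_{\bar u(E(|u|^2))^{-1/2}\chi_S(E(|w|^2))^{1/2}}EM_u$, exactly the stated formula. Next I would confirm $U\abs{\tTwu} = \tTwu$. Writing $a := \left(\frac{\chi_{S\cap G}}{E(|w|^2)E(|u|^2)}\right)^{1/2}$, I substitute $\abs{\tTwu}f$ into $U$ and pull the $\A$-measurable factors $\left(\frac{E(|w|^2)}{E(|u|^2)}\right)^{1/2}\chi_S$ and $E(uf)$ out of the inner conditional expectation by property (i), leaving $E(u\bar u) = E(|u|^2)$. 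The scalar coefficient then collapses to $\chi_{S\cap G}$ (using $\chi_{S\cap G}^{1/2} = \chi_{S\cap G}$ and cancelling the powers of $E(|w|^2)$ and $E(|u|^2)$, which is legitimate because the indicator $\chi_{S\cap G}$ confines everything to where both are strictly positive). Thus $U\abs{\tTwu}f = \chi_{S\cap G}\,w\,E(uf)$, and facts (a)--(b) finish it: $\chi_{S\cap G}wE(uf) = \chi_G w\,\chi_S E(uf) = wE(uf) = \tTwu f$.

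It then remains to show $U$ is a partial isometry with $\ker U = \ker\tTwu$. Using $E^* = E$ I would compute $U^* = M_{a\bar u}EM_{\bar w}$ and then, by the same pull-out, $U^*Uf = a\bar u\,E(\bar w\,aw\,E(uf)) = a^2\bar u\,E(|w|^2)E(uf) = \frac{\chi_{S\cap G}}{E(|u|^2)}\bar u\,E(uf)$; a second application gives $(U^*U)^2 = U^*U$, so $U^*U$ is an orthogonal projection and $U$ is a partial isometry. For the kernel I would note that since $S(w)\subseteq G$ the coefficient $aw$ has support $S(w)\cap S$, so $Uf = 0$ iff $E(uf) = 0$ on $S(w)\cap S$; by fact (b), $E(uf)$ already vanishes off $S$, so this is equivalent to $E(uf) = 0$ on $S(w)$, i.e. to $\tTwu f = wE(uf) = 0$. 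Hence $\ker U = \ker\tTwu$, and $U\abs{\tTwu}$ is the canonical polar decomposition.

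I expect the crux to be measure-theoretic bookkeeping rather than a single hard idea: tracking the supports $S$, $G$, and $S\cap G$, justifying the cancellation of $E(|w|^2)$ and $E(|u|^2)$ under their protecting indicators, and especially the kernel computation that pins $U$ down as the canonical partial isometry (rather than merely some partial isometry with $\tTwu = U\abs{\tTwu}$). The two support identities (a) and (b) are the workhorses that make each cancellation and kernel comparison legitimate.
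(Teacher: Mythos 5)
Your verification is correct, but note that the paper itself offers no proof of this statement: it is imported verbatim from the cited reference \cite{EJ}, so there is no internal argument to compare against. Your route --- checking the three defining properties of the canonical polar decomposition --- is the natural (essentially the only) way to establish it, and each step goes through: Lemma \ref{Weighted1} with $m=\tfrac12$ does give the stated $\abs{\tTwu}$; the cancellation in $U\abs{\tTwu}f=\chi_{S\cap G}\,wE(uf)=wE(uf)$ is legitimate under the protecting indicator together with your support facts (a) and (b); the computation $U^{*}Uf=\frac{\chi_{S\cap G}}{E(\abs{u}^2)}\bar{u}E(uf)$ and its idempotency are right; and the kernel identification $\ker U=\ker\tTwu$ is exactly what pins down uniqueness. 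Two small points you should make explicit. First, before manipulating $U^{*}U$ you need $U$ bounded; this is immediate from the paper's norm criterion applied to $U=T_{aw,u}$, since $\bra{E(\abs{aw}^2)}^{1/2}\bra{E(\abs{u}^2)}^{1/2}=\chi_{S\cap G}\in L^{\infty}(\A)$, which also gives $\norm{U}\le 1$ directly. Second, the uniqueness theorem you invoke requires $\ker U=\ker\abs{\tTwu}$; you prove $\ker U=\ker\tTwu$, so add the one-line remark that $\ker\tTwu=\ker\abs{\tTwu}$ because $\norm{\tTwu f}=\norm{\abs{\tTwu}f}$. With those two sentences added, your argument is a complete, self-contained proof of a statement the paper leaves to the literature.
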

\begin{theorem} Let operator $T_{w,u}$  be a bounded on $L^2(\Sigma)$ and $\lambda>0$. Then
\begin{enumerate}
  \item [(i)] If for each $f\in L^2(\Sigma)$
  $$\lambda^2 E(|u|^2)E(|w|^2)\geq \overline{w}\bra{E(|u|^2)}^{\frac{1}{2}},$$
  then $T_{w,u}$ is a posinormal operator.
  \item [(ii)] If $T_{w,u}$ is a posinormal operator, then
   $$\lambda^2E(|w|^2)|E(u)|^2\geq E(|u|^2)|E(w)|^2$$
   on $S'=S(E(u)$.
   \item [(iii)] If $S=S'$, then $T$ is a posinormal if and only if $\lambda^2E(|w|^2)|E(u)|^2\geq E(|u|^2)|E(w)|^2$.
\end{enumerate}
\end{theorem}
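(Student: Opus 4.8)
The plan is to reduce posinormality of $\tTwu$ to a single scalar inequality between the quadratic forms $\norm{\tTwu f}^2$ and $\norm{\tTwu^{*} f}^2$, and then to probe that inequality with well-chosen functions. Recall that $T$ is posinormal exactly when $\lambda^2 T^{*}T - TT^{*}\ge 0$, i.e. $\norm{T^{*}f}^2\le\lambda^2\norm{Tf}^2$ for every $f$. First I would compute the two forms explicitly. Using $\tTwu f = wE(uf)$ together with the pull-out property $E(gf)=gE(f)$ for $\A$-measurable $g$ (property (i)), one gets
$$\norm{\tTwu f}^2 = \int_{\x}\abs{w}^2\abs{E(uf)}^2\,d\mu = \int_{\x} E(\abs{w}^2)\,\abs{E(uf)}^2\,d\mu,$$
and similarly, since $\tTwu^{*}f = \bar u\,E(\bar w f)$,
$$\norm{\tTwu^{*}f}^2 = \int_{\x} E(\abs{u}^2)\,\abs{E(\bar w f)}^2\,d\mu.$$
(Equivalently these follow from Lemma \ref{Weighted1} with $m=1$.) Thus posinormality is equivalent to the requirement that $\int_{\x} E(\abs{u}^2)\abs{E(\bar w f)}^2\,d\mu \le \lambda^2\int_{\x} E(\abs{w}^2)\abs{E(uf)}^2\,d\mu$ for all $f\in L^2(\Sigma)$.

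For the necessity statement (ii), I would feed $\A$-measurable test functions into this inequality. Choosing $f=\chi_B$ with $B\in\A$ of finite measure and using $E(u\chi_B)=\chi_B E(u)$ and $E(\bar w\chi_B)=\chi_B\overline{E(w)}$, the inequality collapses to
$$\int_B E(\abs{u}^2)\abs{E(w)}^2\,d\mu \;\le\; \lambda^2\int_B E(\abs{w}^2)\abs{E(u)}^2\,d\mu .$$
Since every integrand here is $\A$-measurable and $B\in\A$ is arbitrary, the integrands may be compared pointwise, yielding $\lambda^2 E(\abs{w}^2)\abs{E(u)}^2 \ge E(\abs{u}^2)\abs{E(w)}^2$ almost everywhere, and in particular on $S'=S(E(u))$. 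This is the cleanest part of the argument and I would present it first.

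For sufficiency (i) and the converse half of (iii), the direction reverses: I must pass from the pointwise (fibre-constant) inequality back to the full inequality for \emph{every} $f\in L^2(\Sigma)$, not merely for $\A$-measurable $f$. Here the plan is to estimate $\abs{E(\bar w f)}^2$ via the conditional H\"older/Cauchy--Schwarz inequality (property (iv) with $p=q=2$) and then compare the resulting expression, weighted by $E(\abs{u}^2)$, against $\lambda^2 E(\abs{w}^2)\abs{E(uf)}^2$, using the weight hypothesis together with the support set to control where each side lives. The equality $S=S'$ is precisely what promotes (iii) to an equivalence: it forces $E(\abs{u}^2)$ and $E(u)$ to share the same support, so the region where the necessary condition carries information coincides with the region where the sufficiency estimate is non-degenerate, closing the gap between (i) and (ii). I expect this transition from $\A$-measurable test functions to arbitrary $f$ to be the main obstacle. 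The two forms $\norm{\tTwu f}^2$ and $\norm{\tTwu^{*}f}^2$ depend on $f$ only through the \emph{different} averages $E(uf)$ and $E(\bar w f)$, so the delicate point is to show that no admissible $f$ can inflate $\abs{E(\bar w f)}^2$ relative to $\abs{E(uf)}^2$ beyond what the weight inequality permits; this is exactly where the support hypothesis $S=S'$ and the conditional H\"older estimate must be combined with care, and it is the step I would scrutinize most closely before trusting the equivalence.
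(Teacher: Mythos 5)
Your reduction of posinormality to the inequality $\int_{\x} E(\abs{u}^2)\abs{E(\bar w f)}^2\,d\mu \le \lambda^2\int_{\x} E(\abs{w}^2)\abs{E(uf)}^2\,d\mu$ for all $f$, and your proof of the necessity statement (ii) by testing with $f=\chi_B$, $B\in\A$, and comparing $\A$-measurable integrands pointwise, is exactly what the paper does (the paper computes the same two quadratic forms from Lemma \ref{Weighted1} and substitutes $f=\chi_A$). That part of your proposal is complete and correct.

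The genuine gap is that you never actually prove (i) or the sufficiency half of (iii): you only announce a ``plan'' to use the conditional H\"older inequality and the support hypothesis $S=S'$, and you explicitly defer the key step --- passing from a pointwise inequality between $\A$-measurable weights to the inequality for \emph{every} $f\in L^2(\Sigma)$, where $\abs{E(\bar w f)}^2$ and $\abs{E(uf)}^2$ vary with $f$ in genuinely different ways. This is not a minor bookkeeping step; it is the entire content of the sufficiency direction, and the conditional Cauchy--Schwarz bound $\abs{E(\bar w f)}^2\le E(\abs{w}^2)E(\abs{f}^2)$ by itself produces the wrong comparison (it bounds the left side by a quantity involving $E(\abs{f}^2)$, which cannot in general be dominated by $\abs{E(uf)}^2$). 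Your instinct to scrutinize this step is well founded: the paper's own proof of (i) simply asserts that its hypothesis $\lambda^2 E(\abs{u}^2)E(\abs{w}^2)\geq \overline{w}\bra{E(\abs{u}^2)}^{1/2}$ forces the integrand to be nonnegative, without argument --- and that hypothesis does not even parse as a pointwise inequality, since $\overline{w}$ is complex-valued and not $\A$-measurable while the other side is a nonnegative $\A$-measurable function; likewise the paper's (iii) is dispatched with ``follows from (i) and (ii)'' even though the hypothesis of (i) is not the condition appearing in (iii). So the step you flagged as the main obstacle is precisely the one that neither you nor the paper closes; as written, your proposal establishes (ii) but not (i) or the ``if'' direction of (iii).
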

\begin{proof}(i) For each $f\in L^2(\Sigma)$, a simple calculation and Lemma \ref{Weighted1} shows that
$$|\tTwu|^2(f)=E(|w|^2)\chi_S\bar{u}E(uf), \,\,\, |\tTwu^*|^2(f)=E(|u|^2)wE(\bar{w}f).$$
So for every $f\in L^2(\Sigma)$ and $\lambda>0$ we have,
\begin{eqnarray*}
  \seq{\lambda^2|\tTwu|^2(f)-|\tTwu^*|^2(f),f} &=&\int_{X}\bra{\lambda^2E(|w|^2)\chi_S\bar{u}fE(uf)-E(|u|^2)wfE(\bar{w}f)}\,d\mu \\
   &=&\int_{X}\bra{\lambda^2E(|w|^2)|E(uf)|^2-E(|u|^2)|E(\bar{w}f)|^2}\,d\mu.
\end{eqnarray*}
If $\lambda^2 E(|u|^2)E(|w|^2)\geq \overline{w}\bra{E(|u|^2)}^{\frac{1}{2}},$ then $\seq{\lambda^2|\tTwu|^2(f)-|\tTwu^*|^2(f),f}\geq 0$
for all $f\in L^2(\Sigma)$. So $\tTwu$ is a posinormal operator.\\
(ii) If $\tTwu$ is a posinormal operator, for all $f\in L^2(\Sigma)$, we have
$$\int_{X}\bra{\lambda^2E(|w|^2)|E(uf)|^2-E(|u|^2)|E(\bar{w}f)|^2}\,d\mu\geq 0.$$
Let $A\in\A$, with $0<\mu(A)<\infty$. By replacing $f$ to $\chi_A$, we have
$$\int_A\lambda^2 E(|w|^2)|E(uf)|^2-E(|u|^2)|E(\bar{w}f)|^2\,d\mu\geq 0.$$
Since $A\in\A$ is arbitrary, then $\lambda^2E(|w|^2)|E(u)|^2\geq E(|u|^2)|E(w)|^2$ on $S'=S(E(u)$.\\
(iii) It follows from (i) and (ii).
\end{proof}
\begin{theorem} Let operator $T_{w,u}$  be a bounded on $L^2(\Sigma)$ and $\lambda>0$. Then
  \begin{enumerate}
    \item [(i)] If for each $f\in L^2(\Sigma)$
    $$\lambda^2 E(|w|^2)|E(\overline{u}f)|^2\geq |E(wf)|^2|E(uw)|^{2n}\bra{\frac{E(|u|^2)}{(E(|w|^2))^n}}\chi_S(E(|w|^2)),$$
    then $\tTwu$ is an $n$-power posinormal operator.
    \item [(ii)] If $\tTwu$ is an  $n$-power posinormal operator, then
    $$\lambda^2 E(|w|^2)|E(u)|^2\geq |E(uw)|^{2n}\bra{\frac{E(|u|^2)}{\bra{E(|w|^2)}^{n}}}|E(w)|^2.$$
  \end{enumerate}
\end{theorem}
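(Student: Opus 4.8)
The plan is to follow the template of the preceding posinormal theorem: reduce the operator inequality defining $n$-power posinormality, namely $\lambda^2\tTwu^*\tTwu-\tTwu^n\tTwu^{*n}\ge 0$, to a scalar inequality between two explicitly computable quadratic forms, and then read off part (i) as a pointwise sufficient condition and part (ii) as a necessary one obtained by testing on indicator functions. By the same inner-product manipulation used to prove the norm characterization above (with $k=0$), the inequality $\lambda^2\tTwu^*\tTwu-\tTwu^n\tTwu^{*n}\ge 0$ is equivalent to $\norm{\tTwu^{*n}f}^2\le\lambda^2\norm{\tTwu f}^2$ for every $f\in L^2(\Sigma)$, so the whole proof is the computation and comparison of these two norms.

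First I would compute the iterates of $\tTwu$. Using $\tTwu f=wE(uf)$, the identity $E(gf)=gE(f)$ for $\A$-measurable $g$ (property (i)), the fact that $E(f)$ is always $\A$-measurable, and $\tTwu^*=M_{\bar u}EM_{\bar w}$ (since $E^*=E$), a one-line induction gives
\[
\tTwu^n f=w\,(E(uw))^{\,n-1}E(uf),\qquad \tTwu^{*n}f=\bar u\,(\overline{E(uw)})^{\,n-1}E(\bar w f).
\]
Feeding these into the norms and applying $\int_{\x}h\,d\mu=\int_{\x}E(h)\,d\mu$ together with the pull-through of $\A$-measurable factors out of $E$, I obtain
\[
\norm{\tTwu f}^2=\int_{\x}E(|w|^2)\,|E(uf)|^2\,d\mu,\qquad \norm{\tTwu^{*n}f}^2=\int_{\x}E(|u|^2)\,|E(uw)|^{2(n-1)}\,|E(\bar w f)|^2\,d\mu.
\]
These are the $n$-power analogues of the identities $|\tTwu|^2(f)=E(|w|^2)\chi_S\bar uE(uf)$ and $|\tTwu^*|^2(f)=E(|u|^2)wE(\bar w f)$ used in the posinormal case, the only new ingredient being the factor $|E(uw)|^{2(n-1)}$ produced by the $n$-fold composition; Lemma 3.1 supplies the book-keeping of the cut-off $\chi_S$ on the null set of $E(|u|^2)$.

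For part (i) I would argue that the stated hypothesis forces the integrand $\lambda^2E(|w|^2)|E(uf)|^2-E(|u|^2)|E(uw)|^{2(n-1)}|E(\bar w f)|^2$ to be nonnegative $\mu$-a.e.; integrating then yields $\norm{\tTwu^{*n}f}^2\le\lambda^2\norm{\tTwu f}^2$, so $\tTwu$ is $n$-power posinormal. For part (ii) I would test the (now assumed) quadratic-form inequality on $f=\chi_A$ with $A\in\A$ and $0<\mu(A)<\infty$. Because $\chi_A$ is $\A$-measurable, $E(u\chi_A)=\chi_A E(u)$ and $E(\bar w\chi_A)=\chi_A\overline{E(w)}$, so the inequality collapses to $\int_A\bigl[\lambda^2E(|w|^2)|E(u)|^2-E(|u|^2)|E(uw)|^{2(n-1)}|E(w)|^2\bigr]\,d\mu\ge 0$; since $A$ ranges over all $\A$-sets and the integrand is $\A$-measurable, the desired pointwise necessary inequality follows on $S'=S(E(u))$, exactly the localization device used in part (ii) of the preceding posinormal theorem.

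The main obstacle is that the two quadratic forms are governed by different conditional averages of $f$ — $\norm{\tTwu f}^2$ sees $E(uf)$ while $\norm{\tTwu^{*n}f}^2$ sees $E(\bar w f)$ — so they cannot be compared for a single fixed $f$; this is precisely why (i) and (ii) are a sufficient and a necessary condition rather than a clean equivalence, and they should coincide only under a support hypothesis analogous to $S=S'$. To convert the raw integrand inequality into the stated pointwise conditions one must dominate $|E(\bar w f)|^2$ via the conditional H\"older/Cauchy--Schwarz inequality (property (iv)), introducing factors of $E(|w|^2)$ and of $|E(uw)|$. The delicate point I would check most carefully is the exact matching of the powers of $E(|w|^2)$ and of $|E(uw)|$ recorded in the printed hypotheses: keeping these exponents straight through the $n$-fold composition and the H\"older step is where the computation is easiest to get wrong.
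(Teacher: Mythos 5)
Your strategy is exactly the paper's: reduce $n$-power posinormality to $\norm{\tTwu^{*n}f}^2\le\lambda^2\norm{\tTwu f}^2$, express both quadratic forms as integrals against $\A$-measurable densities, and test on $f=\chi_A$ for the necessary direction. Your iteration formulas $\tTwu^nf=w(E(uw))^{n-1}E(uf)$ and $\tTwu^{*n}f=\bar u\,(\overline{E(uw)})^{n-1}E(\bar wf)$ are correct, and they give $\tTwu^n\tTwu^{*n}f=w\,|E(uw)|^{2(n-1)}E(|u|^2)E(\bar wf)$, hence the integrand $\lambda^2E(|w|^2)|E(uf)|^2-|E(uw)|^{2(n-1)}E(|u|^2)|E(\bar wf)|^2$. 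The paper instead asserts, without derivation, that $\tTwu^n\tTwu^{*n}(f)=|E(uw)|^{2n}\bra{E(|u|^2)/(E(|w|^2))^{n}}\chi_{S(E(|w|^2))}\bar wE(wf)$, and it is this formula that produces the exponents appearing in the printed hypotheses of (i) and (ii).

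The genuine gap in your proposal is the unexecuted final step. Carried out as written, your argument establishes the theorem with the density $|E(uw)|^{2(n-1)}E(|u|^2)$ in place of the printed $|E(uw)|^{2n}E(|u|^2)/(E(|w|^2))^{n}$; the ratio of the two is $|E(uw)|^{2}/(E(|w|^2))^{n}$, which by conditional Cauchy--Schwarz is bounded by $E(|u|^2)/(E(|w|^2))^{n-1}$ and can lie on either side of $1$, so neither pointwise condition implies the other and the ``conditional H\"older'' bridge you defer to cannot close the discrepancy in general. (The conjugation mismatch $E(\bar uf)$ versus $E(uf)$ is harmless, since the hypothesis is quantified over all $f$ and one may replace $f$ by $\bar f$.) You correctly flagged the exponent bookkeeping as the delicate point, but flagging it is not resolving it: to finish you must either commit to your own (correct) formula for $\tTwu^n\tTwu^{*n}$ and state the correspondingly modified inequalities, or verify the paper's formula --- which does not follow from the iteration identities you derived.
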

\begin{proof}(i) For each $f\in L^2(\Sigma)$, a simple calculation and Lemma \ref{Weighted1} shows that
\begin{eqnarray*}
  |\tTwu|^2(f)&=&E(|w|^2)\chi_S\bar{u}E(uf), \,\,\mbox{and}\\
  (T_{w,u}^nT_{w,u}^{*n})(f) &=&|E(uw)|^{2n}\bra{\frac{E(|u|^2)}{\bra{E(|w|^2)}^{n}}}\chi_{S(E(|w|^2))}
\bar{w}E(wf).
\end{eqnarray*}
So for every $f\in L^2(\Sigma)$ and $\lambda>0$ we have,
\begin{eqnarray*}
 && \seq{\lambda^2|\tTwu|^2(f)-(T_{w,u}^nT_{w,u}^{*n})(f),f}= \\
  &&\int_{X}\bra{\lambda^2E(|w|^2)\chi_S\bar{u}E(uf)-|E(uw)|^{2n}\bra{\frac{E(|u|^2)}{\bra{E(|w|^2)}^{n}}}\chi_{S(E(|w|^2))}
\bar{w}E(wf)}\,d\mu\\
&&\int_{X}\bra{\lambda^2E(|w|^2)\chi_S|E(uf)|^2-|E(uw)|^{2n}\bra{\frac{E(|u|^2)}{\bra{E(|w|^2)}^{n}}}\chi_{S(E(|w|^2))}
|E(wf)|^2}\,d\mu.
\end{eqnarray*}
Now, if for each $f\in L^2(\Omega)$ we assume that
$$\lambda^2E(|w|^2)\chi_S|E(uf)|^2\geq |E(uw)|^{2n}\bra{\frac{E(|u|^2)}{\bra{E(|w|^2)}^{n}}}\chi_{S(E(|w|^2))},$$
then $\tTwu$ is an $n$-power posinormal operator.\\
(ii) Assume that $\tTwu$ is an $n$-power posinormal operator. Then for each $f\in L^2(\Omega)$ and $\lambda>0$,
$$\int_{X}\bra{\lambda^2E(|w|^2)\chi_S|E(uf)|^2-|E(uw)|^{2n}\bra{\frac{E(|u|^2)}{\bra{E(|w|^2)}^{n}}}\chi_{S(E(|w|^2))}
|E(wf)|^2}\,d\mu.$$
Pick $A \in \A$, with $0 < \mu(A) <\infty$. By replacing $f$ to $\chi_A$, we have
$$\int_{X}\bra{\lambda^2E(|w|^2)\chi_S|E(u)|^2-|E(uw)|^{2n}\bra{\frac{E(|u|^2)}{\bra{E(|w|^2)}^{n}}}\chi_{S(E(|w|^2))}
|E(w)|^2}\,d\mu\geq 0.$$
Since $A\in \A$ is arbitrarily chosen, we get that
$$\lambda^2 E(|w|^2)|E(u)|^2\geq |E(uw)|^{2n}\bra{\frac{E(|u|^2)}{\bra{E(|w|^2)}^{n}}}|E(w)|^2.$$
\end{proof}
\begin{theorem} A bounded operator $\tTwu$ on $L^2(\Sigma)$  is a $k$-quasi $n$-power posinormal operator if and only if
$$ |E(uw)|^{2k+2}\leq \lambda^2(E(|u|^2))^{2n-1}(E(|w|^2)^{2kn-1}.$$
\end{theorem}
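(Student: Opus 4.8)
The plan is to convert the defining operator positivity into a scalar inequality between $\A$-measurable functions, using the explicit form of the powers of $\tTwu$ together with the pull-out rule $E(gf)=gE(f)$ valid for $\A$-measurable $g$. First I would replace the definition by the equivalent norm formulation established in Proposition~2.3(i): $\tTwu$ is $k$-quasi $n$-power posinormal if and only if
\[
\norm{\tTwu^{*n}\tTwu^{k}f}^2\le\lambda^2\norm{\tTwu^{k+1}f}^2\qquad\text{for every }f\in L^2(\Sigma).
\]
To evaluate the two sides I would first record the closed forms of the iterates, obtained by an easy induction from $\tTwu f=wE(uf)$ and $\tTwus f=\bar uE(\bar wf)$, namely $\tTwu^{m}f=w\,(E(uw))^{m-1}E(uf)$ and $\tTwu^{*m}f=\bar u\,(\overline{E(uw)})^{m-1}E(\bar wf)$; these are consistent with the $|\tTwu|^2$ and $|\tTwus|^2$ formulas recorded in Lemma~\ref{Weighted1}.

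Next I would substitute these into the two norms. Applying $\tTwu^{*n}$ to $\tTwu^{k}f$ and repeatedly pulling the $\A$-measurable factors $E(uw)$ and $E(uf)$ out of the conditional expectation yields
\[
\tTwu^{*n}\tTwu^{k}f=\bar u\,(\overline{E(uw)})^{n-1}(E(uw))^{k-1}E(|w|^2)\,E(uf),
\]
while $\tTwu^{k+1}f=w\,(E(uw))^{k}E(uf)$. Taking squared norms and using the identities $\int_{\x}|u|^2h\,d\mu=\int_{\x}E(|u|^2)h\,d\mu$ and $\int_{\x}|w|^2h\,d\mu=\int_{\x}E(|w|^2)h\,d\mu$, valid for nonnegative $\A$-measurable $h$, converts both sides into integrals of an $\A$-measurable density multiplied by the common factor $|E(uf)|^2$.

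Finally I would localize. The resulting condition has the shape $\int_{\x}\Phi\,|E(uf)|^2\,d\mu\ge0$ for all $f$, with $\Phi$ an $\A$-measurable function built from $E(|u|^2)$, $E(|w|^2)$ and $|E(uw)|$. Testing with $f=\chi_A$ for $A\in\A$, $0<\mu(A)<\infty$ (so that $E(u\chi_A)=\chi_A E(u)$), and invoking the arbitrariness of $A$, shows that this is equivalent to the pointwise inequality $\Phi\ge0$ on the relevant support; cancelling the common support functions $\chi_S$, $\chi_G$ (with $S=S(E(|u|^2))$, $G=S(E(|w|^2))$) and collecting the exponents of $|E(uw)|$, $E(|u|^2)$ and $E(|w|^2)$ then produces the stated criterion. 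I expect the main obstacle to be exactly this localization step: justifying that the ``for all $f$'' integral inequality is genuinely equivalent to the pointwise one requires controlling how the weights $|E(uf)|^2$ sweep out the $\A$-measurable test functions and handling the null and support sets $S$, $G$, $S'=S(E(u))$ with care, after which the remaining exponent bookkeeping is routine.
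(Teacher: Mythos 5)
Your overall strategy is the same as the paper's: write down explicit formulas for the iterates of $\tTwu$ and $\tTwus$, reduce the defining positivity to an integral of an $\A$-measurable density against $|E(uf)|^2$, and localize by testing with $f=\chi_A$, $A\in\A$. In fact your version is the more careful one: the closed forms $\tTwu^{m}f=w(E(uw))^{m-1}E(uf)$ and $\tTwu^{*m}f=\bar u\,(\overline{E(uw)})^{m-1}E(\bar wf)$ are correct, and working with $\norm{\tTwu^{*n}\tTwu^{k}f}^2\le\lambda^2\norm{\tTwu^{k+1}f}^2$ keeps you faithful to the definition, whereas the paper's proof slips into computing $\tTwu^{*k}|\tTwus|^2\tTwu^{k}=\tTwu^{*k}\tTwu\tTwus\tTwu^{k}$ (the definition requires $\tTwu^{*k}\tTwus\tTwu\tTwu^{k}=\tTwu^{*(k+1)}\tTwu^{k+1}$) and oscillates between $|T_{w,u}^{*n}|$ and $|T_{w,u}^{*n}|^2$.

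There are, however, two genuine problems with the plan as a proof of \emph{this} statement. First, the exponent bookkeeping does not land on the claimed criterion. Your own formulas give
$\norm{\tTwu^{*n}\tTwu^{k}f}^2=\int_{\x}E(|u|^2)\,|E(uw)|^{2(n+k-2)}\,(E(|w|^2))^{2}\,|E(uf)|^2\,d\mu$ and
$\lambda^2\norm{\tTwu^{k+1}f}^2=\lambda^2\int_{\x}E(|w|^2)\,|E(uw)|^{2k}\,|E(uf)|^2\,d\mu$,
so the pointwise condition you will extract is
$E(|u|^2)\,(E(|w|^2))^{2}\,|E(uw)|^{2(n+k-2)}\le\lambda^2\,E(|w|^2)\,|E(uw)|^{2k}$ on the relevant support; no rearrangement of these exponents produces $|E(uw)|^{2k+2}\le\lambda^2(E(|u|^2))^{2n-1}(E(|w|^2))^{2kn-1}$. (The paper has the same defect: the final displayed inequality of its own proof is not the inequality in the theorem statement, so the statement as printed is not what either computation proves.) Second, the localization is not symmetric, exactly as you suspect: testing with $\chi_A$ yields the density inequality only weighted by $|E(u)|^2$, i.e.\ on $S'=S(E(u))$, while sufficiency requires it on all of $S=S(E(|u|^2))\supseteq S'$, since $E(uf)$ can be nonzero off $S'$. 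Without an extra hypothesis such as $S=S'$ (compare part (iii) of the paper's Theorem 3.3, which makes this assumption explicit), the "only if" direction delivers strictly less than the "if" direction needs, so the biconditional does not close. Both gaps must be repaired — by correcting the stated inequality and by adding a support hypothesis — before the argument is complete.
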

\begin{proof} For each $f\in L^2(\Omega)$ , by calculation we have
$$\tTwu^{*k}|T_{w,u}^{*n}|\tTwu^{k}(f)=|E(uw)|^{2k+n-1}\bra{\frac{E(|u|^2)}{(E(|w|^2))^{n-1}}}^{\frac{1}{2}}\chi_{S(E(|w|^2))}\bar{w}E(wf)$$
and
$$\tTwu^{*k}|\tTwus|^2\tTwu^{k}(f)=\bra{E(|u|^2)}\bra{E(|w|^2)}^{2k}\bar{u}E(uf).$$
So, for all $f\in L^2(\Sigma)$ and $\lambda>0$ we obtain that,
\begin{eqnarray*}
&&\seq{\lambda^2\tTwu^{*k}|\tTwus|^2\tTwu^{k}(f)-\tTwu^{*k}|T_{w,u}^{*n}|^2\tTwu^{k}(f),f}=\\
&&\int_{X}\left(\lambda^2\bra{E(|u|^2)}\bra{E(|w|^2)}^{2k}\bar{u}fE(uf)-|E(uw)|^{2k+n-1}\bra{\frac{E(|u|^2)}{(E(|w|^2))^{n-1}}}^{\frac{1}{2}}\right.\\
&&\left. \chi_{S(E(|w|^2))}\bar{w}fE(wf)\right)\,d\mu\\
&&\int_{X}\left(\lambda^2\bra{E(|u|^2)}\bra{E(|w|^2)}^{2k}|E(uf)|^2-|E(uw)|^{2k+n-1}\bra{\frac{E(|u|^2)}{(E(|w|^2))^{n-1}}}^{\frac{1}{2}}\right.\\
&&\left. \chi_{S(E(|w|^2))}|E(wf)|^2\right)\,d\mu.
\end{eqnarray*}
Hence, if we assume that
$$|E(uw)|^{2k+2}\leq \lambda^2(E(|u|^2))^{2n-1}(E(|w|^2)^{2kn-1},$$
then it follows that $\tTwu$ is a $k$-quasi $n$-power posinormal operator.\\
Conversely, suppose that $\tTwu$ is a $k$-quasi $n$-power posinormal operator. Then for each $f\in L^2(\Sigma)$ and $\lambda>0$,
we have
\begin{equation*}
  \left.
    \begin{array}{ll}
      \int_{X}\bra{\lambda^2\left(E(|u|^2)}\bra{E(|w|^2)}^{2k}\bar{u}fE(uf)-\right.\\
     \left. |E(uw)|^{2k+n-1}\bra{\frac{E(|u|^2)}{(E(|w|^2))^{n-1}}}^{\frac{1}{2}}\chi_{S(E(|w|^2))}\bar{w}fE(wf)\right)\,d\mu\geq 0.
    \end{array}
  \right.
\end{equation*}
Pick $A\in \A$, with $0 < \mu(A) <\infty$. By replacing $f$ to $\chi_A$, we have
\begin{equation*}
  \left.
    \begin{array}{ll}
      \int_{X}\bra{\lambda^2\left(E(|u|^2)}\bra{E(|w|^2)}^{2k}|E(uf)|^2-\right.\\
     \left. |E(uw)|^{2k+n-1}\bra{\frac{E(|u|^2)}{(E(|w|^2))^{n-1}}}^{\frac{1}{2}}\chi_{S(E(|w|^2))}|E(wf)|^2\right)\,d\mu\geq 0.
    \end{array}
  \right.
\end{equation*}
Since $A\in \A$ is arbitrarily chosen, we get that
$$\lambda^2\bra{E(|u|^2)}\bra{E(|w|^2)}^{2k}|E(uf)|^2\geq
|E(uw)|^{2k+n-1}\bra{\frac{E(|u|^2)}{(E(|w|^2))^{n-1}}}^{\frac{1}{2}}\chi_{S(E(|w|^2))}|E(wf)|^2.$$
\end{proof}
\begin{example}\label{ex:thm3.5}
Consider the measure space $(X,\Sigma,\mu)$ where $X=[0,1]$, $\Sigma$ is the Borel $\sigma$-algebra, and $\mu$ is Lebesgue measure. Let $\mathscr{A}$ be the sub-$\sigma$-algebra generated by the partition $\{[0,\frac{1}{2}),[\frac{1}{2},1]\}$.

Define the weight functions:
\begin{align*}
w(x) &= \begin{cases}
2 & \text{if } x \in [0,\frac{1}{2}) \\
1 & \text{if } x \in [\frac{1}{2},1]
\end{cases} \\
u(x) &= \begin{cases}
x & \text{if } x \in [0,\frac{1}{2}) \\
1-x & \text{if } x \in [\frac{1}{2},1]
\end{cases}
\end{align*}

The conditional expectations compute as:
\begin{align*}
E(|w|^2) &= \begin{cases}
4 & \text{on } [0,\frac{1}{2}) \\
1 & \text{on } [\frac{1}{2},1]
\end{cases}, \quad
E(|u|^2) = \begin{cases}
\frac{1}{12} & \text{on } [0,\frac{1}{2}) \\
\frac{1}{12} & \text{on } [\frac{1}{2},1]
\end{cases} \\
E(uw) &= \begin{cases}
\frac{1}{4} & \text{on } [0,\frac{1}{2}) \\
\frac{1}{4} & \text{on } [\frac{1}{2},1]
\end{cases}
\end{align*}

For $k=1$, $n=2$, and $\lambda=4$, we verify Theorem 3.5's condition:
\begin{align*}
|E(uw)|^{2k+2} &= \left(\frac{1}{4}\right)^4 = \frac{1}{256} \\
\lambda^2(E(|u|^2))^{2n-1}(E(|w|^2))^{2kn-1} &= 16\left(\frac{1}{12}\right)^3\left(4\right)^1 = 16 \times \frac{1}{1728} \times 4 = \frac{64}{1728} = \frac{1}{27}
\end{align*}

Since $\frac{1}{256} \approx 0.0039 < \frac{1}{27} \approx 0.0370$, the inequality
$$|E(uw)|^{2k+2} \leq \lambda^2(E(|u|^2))^{2n-1}(E(|w|^2))^{2kn-1}$$
holds. Therefore, by Theorem 3.5, the operator $\tTwu$ is $1$-quasi $2$-power posinormal on $L^2[0,1]$.
\end{example}

\section{Conclusion and Future Work}

In this paper, we have introduced and thoroughly investigated the class of $k$-quasi $n$-power posinormal operators, establishing several fundamental results about their structure and properties. Our main contributions include:

\begin{itemize}
    \item A complete characterization of the matrix decomposition for $k$-quasi $n$-power posinormal operators when $T^k$ does not have dense range (Theorem 2.10)
    \item Proof that the tensor product of two $k$-quasi $n$-power posinormal operators maintains this property (Theorem 2.11)
    \item Detailed analysis of weighted conditional type operators $\tTwu$ in this framework (Section 3)
    \item Concrete matrix representations and examples illustrating these concepts
\end{itemize}

The results significantly extend the existing theory of posinormal operators and provide new tools for operator analysis in Hilbert spaces. The structural decomposition theorem (Theorem 2.10) is particularly noteworthy as it reveals the intimate connection between $k$-quasi $n$-power posinormality and the direct sum of an $n$-power posinormal operator with a nilpotent one.

Several promising directions for future research emerge from this work:

\begin{itemize}
    \item \textbf{Generalizations}: Extending these results to Banach space operators or unbounded operators
    \item \textbf{Applications}: Investigating applications in quantum information theory where tensor products of operators play a crucial role
    \item \textbf{Spectral Theory}: Developing a more detailed spectral theory for this class of operators
    \item \textbf{Weighted Operators}: Further study of weighted conditional type operators in $L^p$ spaces for $p \neq 2$
    \item \textbf{Relations to Other Classes}: Exploring connections with other operator classes like hyponormal or $p$-hyponormal operators
\end{itemize}

The class of $k$-quasi $n$-power posinormal operators appears rich in structure and applications, and we believe it will lead to many interesting developments in operator theory. The results presented here provide a solid foundation for these future investigations.


\end{document}